\begin{document}




\def\RN {\mathbb{R}^n}
\newcommand{\norm}[1]{\left\Vert#1\right\Vert}
\newcommand{\abs}[1]{\left\vert#1\right\vert}
\newcommand{\set}[1]{\left\{#1\right\}}
\newcommand{\Real}{\mathbb{R}}
\newcommand{\RR}{\mathbb{R}^n}
\newcommand{\supp}{\operatorname{supp}}
\newcommand{\card}{\operatorname{card}}
\renewcommand{\L}{\mathcal{L}}
\renewcommand{\P}{\mathcal{P}}
\newcommand{\T}{\mathcal{T}}
\newcommand{\A}{\mathbb{A}}
\newcommand{\K}{\mathcal{K}}
\renewcommand{\S}{\mathcal{S}}
\newcommand{\blue}[1]{\textcolor{blue}{#1}}
\newcommand{\red}[1]{\textcolor{red}{#1}}
\newcommand{\Id}{\operatorname{I}}

\newtheorem{thm}{Theorem}[section]
\newtheorem{prop}[thm]{Proposition}
\newtheorem{cor}[thm]{Corollary}
\newtheorem{lem}[thm]{Lemma}
\newtheorem{lemma}[thm]{Lemma}
\newtheorem{exams}[thm]{Examples}
\theoremstyle{definition}
\newtheorem{defn}[thm]{Definition}
\newtheorem{rem}[thm]{Remark}

\numberwithin{equation}{section}

\title[ Carleson measures, BMO spaces and balayages]
{  Carleson measures, BMO spaces and balayages associated to\\[1.5pt] Schr\"odinger operators
}

\author[P. Chen, X.T. Duong, J. Li, L. Song and L.X. Yan]{Peng Chen, \ Xuan Thinh Duong, \
 Ji Li, \ Liang Song \ and \ Lixin Yan
}
\address{Peng Chen, Department of Mathematics, Sun Yat-sen (Zhongshan)
University, Guangzhou, 510275, P.R. China}
\email{achenpeng1981@163.com}
\address{Xuan Thinh Duong, Department of Mathematics, Macquarie University, NSW 2109, Australia}
\email{xuan.duong@mq.edu.au}
\address{Ji Li, Department of Mathematics, Macquarie University, NSW, 2109, Australia}
\email{ji.li@mq.edu.au}
\address{Liang Song, Department of Mathematics, Sun Yat-sen (Zhongshan)
University, Guangzhou, 510275, P.R. China}
\email{songl@mail.sysu.edu.cn}
\address{
Lixin Yan, Department of Mathematics, Sun Yat-sen (Zhongshan) University, Guangzhou, 510275, P.R. China}
\email{mcsylx@mail.sysu.edu.cn
}


  \date{\today}
 \subjclass[2010]{42B35, 42B37, 35J10,  47F05}
\keywords{ BMO space, Carleson measure, balayage, Poisson semigroup, the reverse H\"older class, Schr\"odinger operators.}

\begin{abstract}     Let $\L$ be a Schr\"odinger operator of the form
$\L=-\Delta+V$ acting on $L^2(\mathbb R^n)$, $n\geq3$, where the nonnegative
potential $V$ belongs to  the reverse H\"older class    $B_q$ for some $q\geq n.$
 Let ${\rm BMO}_{{\mathcal{L}}}(\RR)$ denote the BMO
space  associated to the Schr\"odinger operator $\L$ on $\RR$.
In this article we show that for every  $f\in {\rm BMO}_{\mathcal{L}}(\RR)$ with compact support,
then there exist   $g\in L^{\infty}(\RR)$
 and a finite Carleson measure $\mu$ such that
 $$
 f(x)=g(x) + S_{\mu, {\mathcal P}}(x)
 $$
 with $\|g\|_{\infty} +\||\mu\||_{c}\leq C \|f\|_{{\rm BMO}_{\mathcal{L}}(\RR)},$
 where
 $$
  S_{\mu, {\mathcal P}}=\int_{{\mathbb R}^{n+1}_+}  {\mathcal P}_t(x,y) d\mu(y, t),
  $$
 and
 ${\mathcal P}_t(x,y)$ is the kernel of the Poisson semigroup
 $\{e^{-t\sqrt{\L}}\}_{t> 0} $ on $L^2(\mathbb R^n)$.
 Conversely, if   $\mu$ is a Carleson measure, then   $S_{\mu, {\mathcal P}}$ belongs to
the space ${\rm BMO}_{{\mathcal{L}}}(\RR)$.
 This  extends the  result for the classical John--Nirenberg BMO
 space by Carleson \cite{C} (see also \cite{U,GJ,W}) to the BMO setting associated to Schr\"odinger operators.
 \end{abstract}

\maketitle


\section{Introduction }
\setcounter{equation}{0}

Consider  the Schr\"odinger  operator with the non-negative  potential $V:$
\begin{equation}\label{e1.5}
\L  =-\Delta  +V(x) \ \ \ {\rm on} \ \ L^2(\RR), \ \ \ \  n\geq 3,
\end{equation}
where   $V$ belongs to  the reverse H\"older class    $B_q$  for  some $q\geq n/2$,
which by definition means that   $V\in L^{q}_{\rm loc}(\RR), V\geq 0$, and
there exists a  constant $C>0$ such that   the reverse H\"older inequality
\begin{equation}\label{e1.6}
\left(\frac{1}{\abs{B}}\int_BV(y)^q~dy\right)^{1/q}\leq\frac{C}{\abs{B}}\int_BV(y)~dy
\end{equation}
holds for all   balls $B$ in $\RR.$

 The operator   $\L$ is a self-adjoint
operator on $L^2(\RR)$, and $\L$ generates the  Poisson semigroup $\{e^{-t\sqrt{\L}}\}_{t>0}$ on $L^2(\RR)$.
Since the potential $V$ is non-negative,  the semigroup kernels ${\mathcal P}_t(x,y)$  of the operators $e^{-t\sqrt{\L}}$ satisfy
\begin{eqnarray*}
0\leq {\mathcal P}_t(x,y)\leq p_t(x-y)
\end{eqnarray*}
for all $x,y\in\RR$ and $t>0$, where
\begin{eqnarray}\label{ep}
p_t(x-y)=c_n {t\over (t^2 +|x|^2)^{{n+1\over 2}}}, \ \ \ \ c_n={\Gamma\Big({n+1\over 2}\Big) \over \pi^{(n+1)/2}}
\end{eqnarray}
is the kernel of the classical Poisson semigroup
$\set{{  P}_t}_{t>0}=\{e^{-t\sqrt{-\Delta}}\}_{t>0}$ on $\Real^n$.

Following \cite{DGMTZ},   a locally integrable function $f$ belongs to  BMO$_{{\mathcal{L}}}({\mathbb R}^n)$
whenever there is constant $C\geq 0$ so that
\begin{equation} \label{e1.7}
  {1\over |Q|}\int_{Q}|f(y)- f_{Q}|dy\leq C,
  \end{equation}
  where $f_Q$ stands for the average of $f$ over the cube  $Q$ on $\RR$,
  i.e., $f_Q={1\over |Q|}\int_Q f(y)dy$, and
\begin{equation}\label{e1.8}
      {1\over |Q|}\int_{Q}|f(y)|dy\leq C
\end{equation}
 for every  cubes   $Q$ with  $\ell(Q)>\rho(x)$, where $\ell(Q)$ is the sidelength of $Q$ and $x$ is the centre of $Q$, and
  the function $\rho(x)=\rho(x; V)$   takes the explicit form
\begin{equation}\label{e1.9}
 \rho(x)=\sup \Big{\{}r>0: \ {1\over r^{n-2}} \int_{B(x,r)} V(y)dy \leq 1 \Big{\}}.
\end{equation}
 Throughout the article we assume that $V\not\equiv 0$ so that $0<\rho(x)<\infty$ (see \cite{Shen}).
We define $\|f\|_{{\rm BMO}_{\mathcal{L}}(\RR)}$ to be the smallest
 $C$  in the right hand sides of \eqref{e1.7} and \eqref{e1.8}.
 Because of \eqref{e1.8}, this ${\rm BMO}_{{\mathcal{L}}}(\RR)$ space is in fact a proper subspace
 of  the classical BMO space of John-Nirenberg.

Consider the function
 \begin{eqnarray}\label{e1.10}
 S_{\mu, {\mathcal P}}(x)=\int_{{\mathbb R}^{n+1}_+}  {\mathcal P}_{t}(x,y) d\mu(y, t),
 \end{eqnarray}
 where $\mu$ is a Borel measure on ${\mathbb R}^{n+1}_+$.
It is easy to see that if $\mu$
 is finite then the integral in \eqref{e1.10} -- called the sweep or balayage of $\mu$ with respect ${\mathcal P}$
 -converges absolutely for a.e. $x\in \RR$, and   $\| S_{\mu, \, {\mathcal P}}\|_1\leq C(n)\|\mu\|.$

  The aim of this article is to prove the following result.

 \begin{thm} \label{th1.1} Suppose $V\in B_q$ for some $q\geq n.$ Then we have

\begin{itemize}
\item[(i)]
 If $\mu$ is a Carleson measure, then $S_{\mu, \, {\mathcal P}}\in {\rm BMO}_{\mathcal{L}}(\RR)$
 with $\|S_{\mu, {\mathcal P}}\|_{{\rm BMO}_{\mathcal{L}}(\RR)} \leq C(n)\||\mu\||_c.$

 \item[(ii)]
Let $f\in {\rm BMO}_{\mathcal{L}}(\RR)$ have  compact support. There exist  $g\in L^{\infty}(\RR)$
 and a finite Carleson measure $\mu$ such that
  \begin{eqnarray}\label{e1.11}
 f(x)=g(x) + S_{\mu, {\mathcal P}} (x),
\end{eqnarray}
 where $\|g\|_{L^\infty(\RR)} +\||\mu\||_{c}\leq C(n)\|f\|_{{\rm BMO}_{\mathcal{L}}(\RR)}.$
 \end{itemize}
 \end{thm}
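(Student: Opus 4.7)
For Part~(i), I would verify the two defining inequalities \eqref{e1.7} and \eqref{e1.8} via a standard splitting of the measure. Fix a cube $Q\subset\mathbb{R}^n$ and decompose $\mu=\mu_1+\mu_2$ with $\mu_1:=\mu|_{\widehat{2Q}}$. By Fubini together with the sub-Markovian bound $\int\mathcal{P}_t(x,y)\,dx\leq 1$, the local piece satisfies $|Q|^{-1}\int_Q|S_{\mu_1,\mathcal{P}}|\,dx\leq|\mu|(\widehat{2Q})/|Q|\leq C\||\mu\||_c$. For $\mu_2$ I would control $|\mathcal{P}_t(x,y)-\mathcal{P}_t(x',y)|$ for $x,x'\in Q$ by an H\"older/gradient estimate (inherited from the pointwise bound $\mathcal{P}_t\leq p_t$) and sum dyadically over $(y,t)$-annuli outside $\widehat{2Q}$, which gives the oscillation part of \eqref{e1.7}. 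For the extra condition \eqref{e1.8} on cubes with $\ell(Q)>\rho(x_Q)$, one invokes the enhanced decay of the Schr\"odinger Poisson kernel above the critical scale, namely $\mathcal{P}_t(x,y)\leq C_N\bigl(\rho(x)/t\bigr)^{N}p_t(x-y)$ for $t\geq\rho(x)$ (a consequence of the $B_q$-hypothesis on $V$), which compensates for the failure of $\mathcal{P}_t$ to reproduce constants.

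For Part~(ii), the plan is to adapt Wilson's constructive argument to $\mathcal{L}$. Let $u(x,t):=e^{-t\sqrt{\mathcal{L}}}f(x)$. Since $f$ has compact support, $u(\cdot,t)\to 0$ as $t\to\infty$, and integration by parts against the weight $t$---using $\partial_{tt}u=\mathcal{L}u$ and the vanishing of the boundary terms---produces the reproducing identity $f(x)=\int_0^\infty t\,\mathcal{L}u(x,t)\,dt$. Factoring the semigroup via $\mathcal{L}e^{-t\sqrt{\mathcal{L}}}=e^{-(t/2)\sqrt{\mathcal{L}}}\mathcal{L}e^{-(t/2)\sqrt{\mathcal{L}}}$ and substituting $s=t/2$ converts this into the balayage representation
\begin{equation*}
f(x)\;=\;4\int_{\mathbb{R}^{n+1}_+}\mathcal{P}_s(x,y)\,\bigl[s\mathcal{L}e^{-s\sqrt{\mathcal{L}}}f(y)\bigr]\,dy\,ds.
\end{equation*}
So the naive candidate is $d\nu(y,s):=4\,s\mathcal{L}e^{-s\sqrt{\mathcal{L}}}f(y)\,dy\,ds$. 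The obstruction is that $\nu$ need not be an $L^1$-Carleson measure; only the $L^2$-version $\iint_{\widehat{Q}}|s\mathcal{L}e^{-s\sqrt{\mathcal{L}}}f|^2\,dyds/s\leq C|Q|\,\|f\|^2_{\mathrm{BMO}_{\mathcal{L}}}$ (the square-function characterisation of $\mathrm{BMO}_{\mathcal{L}}$) is available.

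To remedy this, perform a stopping-time truncation. Write $F(y,s):=4\,s\mathcal{L}e^{-s\sqrt{\mathcal{L}}}f(y)$ and fix levels $\lambda_k=2^k\|f\|_{\mathrm{BMO}_{\mathcal{L}}}$. At each level $k$, select the maximal Carleson tents $\widehat{Q}_{k,j}$ above which $|F|$ exceeds $\lambda_k$, restrict $F\,dyds$ to the complement of the stopped tents to define $\mu$, and put the removed piece into $g(x):=\int\mathcal{P}_s(x,y)F(y,s)\chi_{\mathrm{bad}}(y,s)\,dy\,ds$, so that $f=g+S_{\mu,\mathcal{P}}$. A Chebyshev/good-$\lambda$ argument fuelled by the $L^2$-Carleson bound controls the Carleson packing $\sum_{k,j}|Q_{k,j}|$ of stopped tents and yields $\||\mu\||_c\lesssim\|f\|_{\mathrm{BMO}_{\mathcal{L}}}$; the condition \eqref{e1.8} for $\mu$ then follows from Part~(i). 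For the $L^\infty$-bound on $g$, on each stopped tent $|F|$ is comparable to $\lambda_k$ just above the stopping surface, so the pointwise contribution from level $k$ is controlled by a constant times $2^{-k}\|f\|_{\mathrm{BMO}_{\mathcal{L}}}$ using the contractivity of $\mathcal{P}_s$, and summing geometrically in $k$ closes the bound.

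The hardest step is the uniform bound $\|g\|_\infty\lesssim\|f\|_{\mathrm{BMO}_{\mathcal{L}}}$ with an absolute constant: the stopped tents at distinct levels and positions overlap in intricate ways, so one needs a bounded-overlap or John--Nirenberg-style refinement of the $L^2$-Carleson estimate to close the summation in $k$. A further Schr\"odinger-specific subtlety is that $\int\mathcal{P}_s(x,y)\,dy$ is not identically $1$: the decay of $\mathcal{P}_s$ at scales $s>\rho(x)$ must be leveraged to absorb very-large-scale contributions into $g$, and it is precisely here that the hypothesis $V\in B_q$ with $q\geq n$ enters, through the quantitative kernel estimates and the regularity properties of the critical function $\rho$.
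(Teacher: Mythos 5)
Part (i) of your proposal is workable and is in fact a more hands-on route than the paper's: the paper deduces (i) from a general proposition about balayages of Carleson measures into ${\rm BMO}_{\mathcal A}(\RR)$ for semigroup-type approximate identities, and then invokes the known identification of ${\rm BMO}_{\mathcal L}(\RR)$ with the semigroup space ${\rm BMO}_{\mathcal P}(\RR)$. Your direct verification of \eqref{e1.7} and \eqref{e1.8} via the near/far splitting, using the extra decay of $\mathcal{P}_t(x,y)$ at scales $t\gtrsim\rho(x)$ (Lemma~\ref{le2.1}(i)), would give the same conclusion.

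Part (ii), however, has a genuine gap. Your reproducing identity $f=4\int_{{\mathbb R}^{n+1}_+}\mathcal{P}_s(x,y)F(y,s)\,dy\,ds$ with $F=s\mathcal{L}e^{-s\sqrt{\mathcal L}}f$ is correct, and you rightly note that $F\,dy\,ds$ is only an $L^2$-Carleson measure. But the proposed remedy --- a level-set stopping time on $|F|$ with the super-level part absorbed into $g$ --- does not produce a bounded $g$. The heuristic is backwards: on the stopped tents $|F|$ \emph{exceeds} $\lambda_k=2^k\|f\|_{{\rm BMO}_{\mathcal L}}$, so what you remove is the \emph{large} part of the density, not a piece of size $2^{-k}\|f\|_{{\rm BMO}_{\mathcal L}}$. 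Moreover, even where $|F|$ is under control one only has $|F(y,s)|\lesssim \|f\|_{{\rm BMO}_{\mathcal L}}/s$ pointwise, so $\int_0^\infty\int|\mathcal{P}_s(x,y)F(y,s)|\,dy\,ds$ diverges logarithmically; the convergence of the original integral rests on cancellation in $F$, which is destroyed the moment you restrict $F\,dy\,ds$ to a measurable subset. Contractivity of $\mathcal{P}_s$ therefore cannot close the $L^\infty$ bound, and this is exactly the obstruction that the paper's (Wilson-style) argument is built to avoid: there the stopping time is run on the oscillation of $u(x_Q,t_Q)$ across generations ${\bf G}_k$ of dyadic cubes, and Green's theorem is applied on each sawtooth region $\Sigma_Q$ so that the bulk integral becomes (a) boundary integrals over $\partial\Sigma_Q$ weighted by $u(y,t)-u(y_Q,t_Q)$, which is $\leq A+C\|f\|_{{\rm BMO}_{\mathcal L}}$ by construction and whose surface measures pack into a genuine Carleson measure via Lemma~\ref{le3.4}, and (b) a potential term $\iint t\mathcal{P}_t(x,y)V(y)u(y,t)\,dy\,dt$ handled by the weighted estimate \eqref{e2.3}. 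Your outline contains neither the Green's-theorem step nor a substitute for it, so the decomposition $f=g+S_{\mu,\mathcal P}$ with $g\in L^\infty(\RR)$ is not established; you correctly identified the $L^\infty$ bound on $g$ as the hardest step, but the mechanism you offer for it does not work.
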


 We would like to mention that for the case that  ${\mathcal P}_t(x,y) $ is the classical Poisson kernel
  $p_t(x-y)$
 in \eqref{ep}, the proof of (i) of Theorem~\ref{th1.1} of the classical BMO is quite standard (see \cite{G});
  and the result  (ii) of Theorem~\ref{th1.1}   is proved
 by an iteration argument in the  work of L. Carleson \cite{C}; and also in the work of A. Uchiyama \cite{U}, and  by Garnett and Jones \cite{GJ}.
 Later,  J.M. Wilson \cite{W} gives a new proof by using the Poisson semigroup property  and Green's theorem
 to avoid the iteration to make the construction much more explicit.
Our Theorem~\ref{th1.1} extends the result   of the classical BMO
 to the space ${\rm BMO}_{\mathcal{L}}(\RR)$ associated with the Schr\"odinger operators.
The proof   of Theorem~\ref{th1.1} follows the idea of \cite{W} i.e., by using the Poisson semigroup property, and Green's theorem,
but differs
 from it in method and techniques since the kernel for the operator $\mathcal{L}$ is not translation
 invariant and several techniques for the classical Possion kernel are not applicable here.
The standard preservation condition  of the semigroup  $\{e^{-t\L}\}_{t>0}$ fails,  i.e.,
\begin{eqnarray*}
e^{-t\L}1 \not=1.
\end{eqnarray*}
  This is indeed one of the  main obstacles in this article and
  makes the theory quite subtle and delicate. We
  overcome  this problem in the proof
  by  making  use of the   estimates
on the kernel of the Poisson semigroup  under the assumption on $V\in B_q$ for some $q\geq n$,
and some techniques to estimate the norm of the space ${\rm BMO}_{\mathcal{L}}(\RR)$
associated with the Schr\"odinger operators.

 Throughout the article, the letter  ``$C$ " will  denote (possibly different) constants
 that are independent of the essential variables.

\medskip

\section{Preliminaries}
 \setcounter{equation}{0}

 Throughout the article,
we may sometimes use capital letters to denote points in
${\Bbb R}^{n+1}_+$, e.g.,
$X=(x, t),$  and set
\begin{eqnarray*}
{ \nabla}_X u(X)=(\nabla_x u, \partial_tu) \ \ \ {\rm and}\ \ \
|{ \nabla}_Xu|=|\nabla_x u| +|\partial_tu|.
\end{eqnarray*}
For simplicity we will denote by
$\nabla$  the full gradient $\nabla_X$ in $\mathbb{R}^{n+1}$.
For every cube $Q$ on $\RR$, we set  $\ell(Q)$   the sidelength of $Q$, and
let $CQ$ denote the cube concentric with $Q$ and with sidelength $C$
times as big. For  $Q$ as above,  we write ${\widehat Q}=\{(x, t)\in {\Bbb R}^{n+1}_+:  x\in Q, 0<t<\ell(Q) \}$.
 Recall that a
measure $\mu$ defined on ${\mathbb R}^{n+1}_+$ is said to  be a
Carleson measure if
\begin{equation}
|||\mu|||_{c}=: \sup_{Q\subset \RR} {|\mu|({\widehat Q})\over |Q|} <\infty.
\label{e1.1}
\end{equation}

 \subsection{Basic properties of the heat and Poisson semigroups of Schr\"odinger operators}

Let us recall  some basic properties of
the critical radii
 function $\rho(x)$ under the assumption \eqref{e1.6} on $V$
(see Section 2, \cite{DGMTZ}).  Suppose $V\in B_q$ for some $q> n/2.$
There exist $C>0$ and $k_0\geq1$ such that for all $x,y\in\Real^n$
\begin{equation}\label{e2.1}
C^{-1}\rho(x)\left(1+\frac{\abs{x-y}}{\rho(x)}\right)^{-k_0}\leq\rho(y)\leq C\rho(x)
\left(1+\frac{\abs{x-y}}{\rho(x)}\right)^{\frac{k_0}{k_0+1}}.
\end{equation}
In particular, $\rho(x)\sim \rho(y)$ when $y\in B(x, r)$ and $r\leq c\rho(x)$.

 The following estimates on the heat kernel of $L$ are well known.
\begin{prop}[\cite{DGMTZ}]\label{prop-heat}
	Let $L=-\Delta+V$ with $V\in B_q$ for some $q\ge n/2$. Then for each $N>0$ there exists $C_N>0$ such that
	the heat kernel $\mathcal{H}_t(x,y)$ of the semigroup $\{e^{-tL}\}_{t>0}$ satisfies
	\begin{align}\label{hk bound}
		\mathcal{H}_t(x,y)\le C_N\frac{e^{-|x-y|^2/ct}}{t^{n/2}} \left({1+\frac{\sqrt{t}}{\rho(x)}+\frac{\sqrt{t}}{\rho(y)}}\right)^{-N}
	\end{align}
	and
	\begin{align}\label{hk holder}
		|\mathcal{H}_t(x,y)-\mathcal{H}_t(x',y)|\le C_N \left({\frac{|x-x'|}{\sqrt{t}}}\right)^{\beta} \frac{e^{-|x-y|^2/ct}}{t^{n/2}} \left({1+\frac{\sqrt{t}}{\rho(x)}+\frac{\sqrt{t}}{\rho(y)}}\right)^{-N}
	\end{align}
	whenever $|x-x'|\le \sqrt{t}$ and for any $0<\beta< \min\{1, 2-n/q\}$.
\end{prop}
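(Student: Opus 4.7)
The plan is to derive \eqref{hk bound} from the Feynman--Kac representation
$$
\mathcal{H}_t(x,y) = p_t(x-y)\,\mathbb{E}^{x,y}_t\!\left[\exp\Bigl(-\int_0^t V(\omega_s)\,ds\Bigr)\right],
$$
where the expectation is over the Brownian bridge from $x$ to $y$ of duration $t$. Since $V \ge 0$, the exponential factor is at most one, which immediately yields the Gaussian upper bound without the polynomial $\rho$-factor (the case $N=0$). When $\sqrt t \le c\,\min\{\rho(x),\rho(y)\}$ the polynomial factor in \eqref{hk bound} is $O(1)$ by \eqref{e2.1}, so the $N=0$ bound already suffices. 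The substantive regime is $\sqrt t \gg \rho(x)$, in which one must show that a typical bridge trajectory accumulates a large amount of $V$, forcing the Feynman--Kac factor to be small.

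The key quantitative input here is a Khas'minskii-type inequality: combining the defining relation $\rho(x)^{2-n}\int_{B(x,\rho(x))}V = 1$ with the reverse H\"older condition \eqref{e1.6} (and its Gehring self-improvement $V\in B_{q+\varepsilon}$) one obtains a Fefferman--Phong / Kurata-style estimate
$$
\sup_x\,\mathbb{E}^x\!\left[\int_0^{\rho(x)^2} V(\omega_s)\,ds\right] \le C.
$$
Iterating this on disjoint time slices of length $\sim\rho(x)^2$, and using the slow-variation inequality \eqref{e2.1} to control how $\rho$ changes along the path, would give
$$
\mathbb{E}^{x,y}_t\!\left[\exp\Bigl(-\int_0^t V(\omega_s)\,ds\Bigr)\right] \le C_N \bigl(\sqrt t/\rho(x)\bigr)^{-N}.
$$
The corresponding estimate in $\rho(y)$ follows by reversing the time parametrization of the bridge, and taking a geometric mean and multiplying by $p_t(x-y)$ produces \eqref{hk bound}.

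For \eqref{hk holder} I would first use the semigroup identity
$$
\mathcal{H}_t(x,y)-\mathcal{H}_t(x',y) = \int_{\RR}\bigl[\mathcal{H}_{t/2}(x,z)-\mathcal{H}_{t/2}(x',z)\bigr]\,\mathcal{H}_{t/2}(z,y)\,dz
$$
to reduce matters to a local H\"older modulus for $\mathcal{H}_{t/2}(\cdot,z)$. Shen's local H\"older regularity for solutions of $\mathcal{L}u = f$ with $V \in B_q$, together with its parabolic analogue obtained by Moser iteration applied to $\partial_t + \mathcal L$, supplies any H\"older exponent $\beta < \min\{1, 2-n/q\}$ on parabolic cylinders of radius $\sqrt t$; inserting \eqref{hk bound} as a sup-bound over such a cylinder then produces \eqref{hk holder}. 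The main obstacle is the Khas'minskii estimate above: the polynomial decay must be uniform in the starting point and must survive the Brownian-bridge conditioning, which is where the Gehring self-improvement of the $B_q$ class and the slow-variation property \eqref{e2.1} of $\rho$ are used in an essential way. Once that estimate is in hand, the remainder is a routine combination of the semigroup identity with classical parabolic regularity.
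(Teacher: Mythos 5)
First, note that the paper does not prove Proposition \ref{prop-heat} at all: it is quoted from \cite{DGMTZ} (see also Dziuba\'nski--Zienkiewicz and Kurata), where the argument is a perturbation one, not probabilistic. There one starts from Duhamel's formula
$h_t(x-y)-\mathcal{H}_t(x,y)=\int_0^t\int_{\RR} h_s(x-z)V(z)\mathcal{H}_{t-s}(z,y)\,dz\,ds$,
uses Shen's Lemmas 1.2 and 1.8 (essentially the bound \eqref{e2.2} of this paper) to produce a first decay factor $(\sqrt t/\rho(x))^{-\epsilon}$ with some small $\epsilon>0$, and then upgrades $\epsilon$ to an arbitrary $N$ by iterating the semigroup identity together with the slow variation \eqref{e2.1} of $\rho$; the H\"older estimate \eqref{hk holder} is obtained by the same perturbation applied to $h_t(x-y)-h_t(x'-y)$, and it is exactly this step that forces the restriction $\beta<\min\{1,2-n/q\}$.

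Your Feynman--Kac sketch has a genuine gap at its central step. The quantity you propose as the ``key quantitative input,'' namely $\sup_x\mathbb{E}^x\bigl[\int_0^{\rho(x)^2}V(\omega_s)\,ds\bigr]\le C$, points in the wrong direction: by Jensen's inequality it yields the \emph{lower} bound $\mathbb{E}\bigl[\exp\bigl(-\int_0^{\rho(x)^2}V\bigr)\bigr]\ge e^{-C}$, and Khas'minskii's lemma built on it controls $\mathbb{E}\bigl[\exp\bigl(+\int V\bigr)\bigr]$, which is irrelevant here. To make the Feynman--Kac factor decay you need a uniform \emph{smallness} statement on each time slice, i.e. that $\int_0^{\rho(x)^2}V(\omega_s)\,ds\gtrsim 1$ with probability bounded below uniformly in $x$ (and uniformly under the bridge conditioning); that requires the doubling/$A_\infty$ structure of $V\in B_q$ together with Brownian occupation-time estimates, and is precisely the delicate point the perturbative proof avoids. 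No such estimate is supplied, so the claimed bound $\mathbb{E}^{x,y}_t[\exp(-\int_0^tV)]\le C_N(\sqrt t/\rho(x))^{-N}$ does not follow from what you state. Separately, for \eqref{hk holder} a Moser-iteration argument gives \emph{some} H\"older exponent, but not obviously every $\beta<\min\{1,2-n/q\}$; the stated range comes naturally from perturbing the Gaussian kernel and using \eqref{e2.2}, so you would need to either carry out that perturbation or substantially sharpen the regularity step.
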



%
%
%

Through Bochner's
  subordination formula (see \cite{St}), the Poisson semigroup associated to $\L$ can be obtained
from the heat semigroup:
\begin{align}\label{e3.28}
e^{-t\sqrt{\L}}f(x)=\frac{1}{\sqrt{\pi}}\int_0^\infty\frac{e^{-u}}{\sqrt{u}}~e^{-{t^2\over 4u}\L}f(x)~du=\frac{t}{2\sqrt\pi}\int_0^\infty
\frac{~e^{-{t^2\over 4s}}}{s^{3/2}}e^{-s\L}f(x) ~ds,
\end{align}
which connects the Poisson kernel and the heat kernel as follows:
\begin{align}\label{e PH}
\mathcal{P}_t(x,y)
&=\frac{t}{2\sqrt\pi}\int_0^\infty
\frac{~e^{-{t^2\over 4s}}}{s^{3/2}} \mathcal{H}_s(x,y) ds.
\end{align}

\begin{lem}[\cite{DYZ, DGMTZ, MSTZ}] \label{le2.1}Suppose $V\in B_q$ for some $q\geq n.$ For any $0<\beta< \min\{ 1, 2-n/q\}$ and every $N>0$, there exists
 a constant $C=C_{N}$ such that for $x,y\in \RR$ and $t>0,$
 \begin{itemize}

\item[(i)]
 ${\displaystyle
| {\mathcal P}_t(x,y)|\leq C{t \over (t^2+|x-y|^2)^{n+1\over 2}} \left(1+ {(t^2+|x-y|^2)^{1/2}\over \rho(x)}
+{(t^2+|x-y|^2)^{1/2}\over \rho(y)}\right)^{-N}; }
 $
\smallskip

\item[(ii)]   The kernel $\nabla {\mathcal P}_t(x,y)$ satisfies
 $${\displaystyle
|t\nabla {\mathcal P}_t(x,y)|\leq {C t  \over (t^2+|x-y|^2)^{n+1\over 2}} \left(1+ {(t^2+|x-y|^2)^{1/2}\over \rho(x)}
+{(t^2+|x-y|^2)^{1/2}\over \rho(y)}\right)^{-N};
}
$$

\smallskip

\item[(iii)]
$\displaystyle \big| t\nabla  e^{-t\sqrt{\L}}(1)(x) \big|\le C
  \left(\frac{ {t}}{\rho(x)}\right)^{\beta} \left(1+ {t\over \rho(x)}\right)^{-N}.$
   \end{itemize}

\end{lem}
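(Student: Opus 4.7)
The plan is to read off all three estimates from the subordination identity \eqref{e PH} combined with Proposition~\ref{prop-heat} (and its standard gradient analogue). Throughout I write $d := (t^2+|x-y|^2)^{1/2}$, which by the Gaussian factors in \eqref{e PH} is the effective scale of the subordination variable $s$.

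Part (i) follows by substituting \eqref{hk bound} into \eqref{e PH}. The classical identity
\[
\frac{t}{2\sqrt\pi}\int_0^\infty \frac{e^{-t^2/4s}\,e^{-|x-y|^2/cs}}{s^{(n+3)/2}}\,ds \;\leq\; \frac{C\,t}{d^{n+1}},
\]
obtained by the change of variables $s=d^2u$, produces the factor $t/d^{n+1}$. The additional $\rho$-decay is extracted by splitting at $\sqrt s = \rho(x)\wedge\rho(y)$: on the large-$s$ region $(1+\sqrt s/\rho)^{-N}\leq (\rho/\sqrt s)^N$, and since the Gaussian factors force $\sqrt s \gtrsim d$ on the bulk of the integral, this is $\leq C(\rho/d)^N\leq C(1+d/\rho)^{-N}$; on the small-$s$ region, if $d\geq \rho$, the exponential $e^{-d^2/cs}$ can be bounded by $C_N(\sqrt s/d)^N$, which together with $\sqrt s<\rho$ yields $C_N(\rho/d)^N$, while the case $d\leq\rho$ is trivial. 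Each piece integrates against the same Gaussian weight as above.

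Part (ii) proceeds identically once one has the standard heat-kernel gradient bound
\[
|\nabla_x \mathcal{H}_s(x,y)| \;\leq\; \frac{C}{\sqrt s}\,\frac{e^{-|x-y|^2/cs}}{s^{n/2}}\left(1+\frac{\sqrt s}{\rho(x)}+\frac{\sqrt s}{\rho(y)}\right)^{-N},
\]
which follows from \eqref{hk holder}, the semigroup law, and interior parabolic estimates for $(\partial_s-\Delta+V)\mathcal{H}_s=0$. Substituting this bound and repeating the argument of~(i) contributes an extra factor $1/d$, which combined with the prefactor ``$t$'' in $t\nabla_x\mathcal{P}_t$ gives $t/d\leq 1$ and therefore the same right-hand side. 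The time derivative is obtained by differentiating \eqref{e PH} under the integral, producing two subordination integrals of the form handled in~(i), modulo a harmless factor $t^2/s\leq C$ on the bulk of the domain.

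For part~(iii) I exploit the fact that the classical Poisson semigroup preserves constants, so $1=\frac{t}{2\sqrt\pi}\int_0^\infty s^{-3/2}e^{-t^2/4s}\,ds$; subtracting from \eqref{e3.28} applied to $1$ gives
\[
e^{-t\sqrt{\L}}(1)(x) - 1 \;=\; \frac{t}{2\sqrt\pi}\int_0^\infty \frac{e^{-t^2/4s}}{s^{3/2}}\bigl[e^{-s\L}(1)(x)-1\bigr]\,ds.
\]
The Duhamel identity $1-e^{-s\L}(1)(x)=\int_0^s e^{-u\L}V(x)\,du$, combined with the reverse H\"older hypothesis $V\in B_q$, $q\geq n$, yields (as in \cite{DGMTZ}) the H\"older-in-time estimate
\[
\bigl|\nabla_x\bigl(e^{-s\L}(1)(x)-1\bigr)\bigr| \;\leq\; \frac{C}{\sqrt s}\left(\frac{\sqrt s}{\rho(x)}\right)^{\beta}\left(1+\frac{\sqrt s}{\rho(x)}\right)^{-N},
\]
together with an analogous bound (with $1/s$ in place of $1/\sqrt s$) for $\partial_s[e^{-s\L}(1)-1]$. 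Pulling $\nabla$ under the integral and performing the $s$-integration by the splitting scheme used in~(i) delivers the claim. The hard part is precisely this step: the failure of the preservation condition $e^{-t\L}(1)\neq 1$ forces one to convert the reverse H\"older hypothesis on $V$ into the sharp H\"older exponent $\beta$, and that smallness must then be propagated through the subordination integral without destroying the $N$-fold large-$t$ decay.
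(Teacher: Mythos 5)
The paper does not actually prove Lemma~\ref{le2.1}: it is imported verbatim from \cite{DYZ,DGMTZ,MSTZ}, so there is no in-paper argument to compare against; one can only judge your sketch against the standard proofs in those references. For part (i) your scheme --- insert \eqref{hk bound} into the subordination formula \eqref{e PH}, rescale $s=d^2u$ to produce $t/d^{n+1}$, and on the range $\sqrt s<d$ trade part of the Gaussian $e^{-d^2/cs}$ for powers of $\sqrt s/d$ to extract the $\rho$-decay --- is exactly the standard argument (it is also the scheme the authors themselves use in a commented-out H\"older lemma in the source), so (i) is sound.

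Parts (ii) and (iii) contain genuine gaps. In (ii), the pointwise bound $|\nabla_x\mathcal H_s(x,y)|\le Cs^{-1/2}s^{-n/2}e^{-|x-y|^2/cs}(1+\sqrt s/\rho(x)+\sqrt s/\rho(y))^{-N}$ does \emph{not} follow from the H\"older estimate \eqref{hk holder}: a $C^{\beta}$ modulus with $\beta<1$ gives no pointwise control of a gradient. That gradient bound is a separate and strictly stronger input, and it is precisely where the hypothesis $q\ge n$ (rather than the $q\ge n/2$ of Proposition~\ref{prop-heat}) enters, through Shen's gradient estimates for the fundamental solution of $-\Delta+V$; you must quote or prove it as such. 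In (iii), the spatial-gradient half of your argument would not close as written: from the Duhamel identity $1-e^{-s\L}(1)(x)=\int_0^s e^{-u\L}V(x)\,du$, differentiating in $x$ and bounding $\int|\nabla_x\mathcal H_u(x,y)|V(y)\,dy\lesssim u^{-3/2}(\sqrt u/\rho(x))^{\delta}$ via \eqref{e2.2} leads to $\int_0^s u^{(\delta-3)/2}\,du$, which diverges at $u=0$ because $\delta=2-n/q\le 1$ when $q\ge n$. So the intermediate estimate $|\nabla_x e^{-s\L}(1)(x)|\le Cs^{-1/2}(\sqrt s/\rho(x))^{\beta}(1+\sqrt s/\rho(x))^{-N}$, which is the crux of (iii), cannot be obtained by putting absolute values inside that $u$-integral; the references prove (iii) by a different mechanism (comparing with the free kernel, e.g.\ $\nabla_x e^{-t\sqrt{\L}}(1)(x)=\nabla_x\int(\mathcal P_t(x,y)-p_t(x-y))\,dy$, together with perturbation formulas and Shen's estimates that exploit cancellation). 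The time-derivative component of (iii) is fine, since $\partial_s e^{-s\L}(1)(x)=-\int\mathcal H_s(x,y)V(y)\,dy$ requires no extra $u$-integration; it is only the spatial gradient where your route breaks down.
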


It follows from Lemmas 1.2 and 1.8 in \cite{Shen}  that there is a constant $C>0$ such that
 for a nonnegative Schwartz class function $\varphi$ there exists a constant $C$ such that
 \begin{equation}\label{e2.2}
 \int_{\RR} \varphi_t(x-y)V(y)dy\leq
 \left\{
 \begin{array}{lll}
 Ct^{-1}\big({\sqrt{t}\over \rho(x)}\big)^{\delta}\ \ \ &{\rm for}\ t\leq \rho(x)^2\, ;\\[8pt]
 C\big({\sqrt{t}\over \rho(x)}\big)^{C_0+2-n}\ \ \ &{\rm for}\ t> \rho(x)^2\, ,
 \end{array}
 \right.
 \end{equation}
 where $\varphi_t(x)=t^{-n/2}\varphi(x/\sqrt{t}),$   and
 $
 \delta =2-n/q>0.
 $
 From \eqref{e2.1}, we can follow the proof of (d) of Proposition 3.6, \cite{MSTZ} to show
 that
for $V\in B_q,q\geq n/2, $
there is some $\delta\in(0,1)$ and $N> \max\{4, 2\log(C_0+2-n)\}$ with $C_0$ the doubling constant of with respect to
the potential $V$ as in  (1.1), such that for $t>0$ and $x\in\mathbb R^n$,
 \begin{eqnarray}\label{e2.3}
  \int_{{\Bbb R}^n} \left(1+ \left |{\rm log} {\rho(y)\over t}\right|\right)
    \, t{\mathcal P}_t(x,y) V(y)dy \le C t^{-1}\min \left\{
  \left(\frac{ {t}}{\rho(x)}\right)^{\delta}, \, \left( {t\over \rho(x)}\right)^{-{N\over2}+2}\right\}.
   \end{eqnarray}

\smallskip

\begin{lem}\label{le2.2} Suppose $V\in B_q$ for some $q\geq n/2.$
Let $f\in {\rm BMO}_{{\mathcal{L}}}({\mathbb R}^n).$ Then we have
 \begin{itemize}

\item[(i)]  There exists $C>0$ such that for all   $Q
 $ with $\ell(Q)<\rho(x)$, then
 $$
 |f_{2Q}|\leq C\left(1+{\rm log} {\rho(x)\over \ell(Q)}\right)  \|f\|_{{\rm BMO}_{\mathcal{L}}(\RR)}.
 $$

\item[(ii)] For every $p\in [1, \infty),$ there exists $C=C(p, \rho)>0$ such that
 $$
 \left({1\over |Q|}\int_Q |f-f_Q|^pdx \right)^{1/p}\leq C  \|f\|_{{\rm BMO}_{\mathcal{L}}(\RR)}, \ \
 \mbox{ for all cubes  $Q$},
 $$
 and for every $Q$ with $\ell(Q)\geq \rho(x):$
  $$
 \left({1\over |Q|}\int_Q |f|^pdx \right)^{1/p}\leq C  \|f\|_{{\rm BMO}_{\mathcal{L}}(\RR)}.
 $$
 \end{itemize}
\end{lem}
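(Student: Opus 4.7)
The plan is to prove (i) by a standard dyadic BMO chaining argument, using the $L^1$ average control \eqref{e1.8} on cubes of sidelength exceeding $\rho(x)$ as the boundary data that terminates the chain, and to deduce (ii) from the classical John--Nirenberg inequality applied to \eqref{e1.7} together with the same $L^1$ bound \eqref{e1.8}.

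For (i), fix a cube $Q$ centered at $x$ with $\ell(Q)<\rho(x)$. For $k\geq 0$, let $Q_k$ denote the cube centered at $x$ of sidelength $2^{k+1}\ell(Q)$, so that $Q_0=2Q$, and let $k_0$ be the smallest nonnegative integer with $\ell(Q_{k_0})\geq \rho(x)$; a direct calculation gives $k_0\leq 1+\log_2(\rho(x)/\ell(Q))$. The standard averaging estimate
$$
|f_{Q_k}-f_{Q_{k+1}}|\leq \frac{1}{|Q_k|}\int_{Q_k}|f-f_{Q_{k+1}}|\,dx\leq \frac{|Q_{k+1}|}{|Q_k|}\cdot\frac{1}{|Q_{k+1}|}\int_{Q_{k+1}}|f-f_{Q_{k+1}}|\,dx\leq 2^n\|f\|_{{\rm BMO}_{\mathcal{L}}(\RR)},
$$
combined with telescoping, yields $|f_{2Q}-f_{Q_{k_0}}|\leq 2^n k_0\|f\|_{{\rm BMO}_{\mathcal{L}}(\RR)}$. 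Since $Q_{k_0}$ is centered at $x$ and has $\ell(Q_{k_0})\geq \rho(x)$, condition \eqref{e1.8} provides $|f_{Q_{k_0}}|\leq \|f\|_{{\rm BMO}_{\mathcal{L}}(\RR)}$, and assembling these two estimates with the bound on $k_0$ gives (i).

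For (ii), observe that \eqref{e1.7} is precisely the classical John--Nirenberg BMO condition holding uniformly over every cube $Q\subset \RR$; the classical John--Nirenberg inequality therefore applies and produces
$$
\bigg(\frac{1}{|Q|}\int_Q|f-f_Q|^p\,dx\bigg)^{1/p}\leq C_p\|f\|_{{\rm BMO}_{\mathcal{L}}(\RR)}
$$
for every $p\in[1,\infty)$ and every cube $Q$, which is the first assertion in (ii). For the second assertion, suppose $\ell(Q)\geq \rho(x)$. Then \eqref{e1.8} yields $|f_Q|\leq \|f\|_{{\rm BMO}_{\mathcal{L}}(\RR)}$, and Minkowski's inequality combined with the first bound gives
$$
\bigg(\frac{1}{|Q|}\int_Q|f|^p\,dx\bigg)^{1/p}\leq \bigg(\frac{1}{|Q|}\int_Q|f-f_Q|^p\,dx\bigg)^{1/p}+|f_Q|\leq C\|f\|_{{\rm BMO}_{\mathcal{L}}(\RR)}.
$$

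There is no substantial obstacle here; the only point requiring attention in (i) is to stop the chain at the first scale where \eqref{e1.8} becomes available, and to verify that the terminating cube $Q_{k_0}$, being centered at $x$ with $\ell(Q_{k_0})\geq \rho(x)$, actually satisfies the hypothesis of \eqref{e1.8}. The rest is a routine appeal to the defining conditions \eqref{e1.7}--\eqref{e1.8} and to the classical John--Nirenberg theorem.
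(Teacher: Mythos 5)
Your argument is correct and is exactly the standard proof that the paper delegates to \cite{DGMTZ} (Lemma~2 there for the dyadic chaining in (i), Corollary~3 there for the John--Nirenberg step in (ii)), so you have simply filled in the details the authors omit. The only cosmetic point is that \eqref{e1.8} is stated for $\ell(Q)>\rho(x)$ strictly while your terminating cube satisfies $\ell(Q_{k_0})\geq\rho(x)$; chaining one more step to $Q_{k_0+1}$ removes even this quibble.
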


\begin{proof}
For the proof of (i), we can obtain it by making minor modification with that of Lemma 2, \cite{DGMTZ},
and we skip it here. For the proof of (ii), we refer it to Corollary 3, \cite{DGMTZ}.
\end{proof}

 \medskip

\subsection{The Balayage associated with generalized approximation to the identity}

In this section we will work with a class of integral operators $\{{\mathcal A}_t\}_{t>0}$, which
plays the role of   generalized approximations to the identity.
Assume that the operators ${\mathcal A}_t$ are defined by kernels
$a_t(x,y)$ in the  sense that
$$
{\mathcal A}_tf(x)=\int_{{\mathbb R^n}}{\mathcal A}_t(x,y)f(y)dy,
$$
where the kernels ${\mathcal A}_t(x,y)$ satisfy the estimate
\begin{eqnarray}\label{e2.4}
|{\mathcal A}_t(x,y)| \leq C t^{-n} \left(1+{|x-y|\over t}\right)^{-(n+\epsilon)}
\end{eqnarray}
for some $\epsilon >0 $ and
for every function $f$ which satisfies some suitable  growth condition.

The space   ${\rm{BMO_{\mathcal A}}}(\RR)$  associated with   a generalized approximation
to the identity $\{A_t\}_{t>0}$ was introduced and studied in \cite{DY1}.
In the sequel,   $\epsilon$ is a constant as in \eqref{e2.4}.

\begin{defn}\label{def2.3}
 We say that $f\in L^1((1+|x|)^{-(n+\epsilon)})$ is in ${\rm{BMO_{\mathcal A}}}(\RR)$,
the space of functions of bounded  mean oscillation associated with a
generalized approximation to
the identity $\{{\mathcal A}_t\}_{t>0}$,
if there exists some constant $C$ such that for any ball $B$,
\begin{equation}
  {1\over |Q|}\int_Q|f(x)-{\mathcal A}_{\ell(Q)}f(x)|dx\leq C,
\label{e2.5}
\end{equation}
where   $\ell(Q)$ is the sidelength  of the cube $Q$.
The smallest bound $c$ for which (\ref{e2.5}) is satisfied is then taken to
  be the norm of $f$ in this space, and is denoted by $\|f\|_{\rm{BMO_{\mathcal A}}}$.
\end{defn}

 Note first  that  (${\rm{BMO_{\mathcal A}}}$, $\|\cdot \|_{\rm{BMO_{\mathcal A}}}$) is a
semi-normed vector space, with the seminorm vanishing on the space $
{\mathcal K}_{\mathcal A}$, defined by
$$
{\mathcal K}_{\mathcal A}=\bigg\{f\in L^1((1+|x|)^{-(n+\epsilon)}): {\mathcal A}_tf(x)=f(x) \ {\rm for}\
 {\rm  almost\
all\ }\ x\in {\RR} {\rm \ for\ all}\ t>0 \bigg\}.
$$
The space ${\rm{BMO_{\mathcal A}}}$   is understood to be modulo $
{\mathcal K}_{\mathcal A}$. It is easy to check that
$L^{\infty}(\RR)\!\subset {\rm{BMO_{\mathcal A}}}(\RR)$    with $
\|f\|_{\rm{BMO_{\mathcal A}}}\leq C\|f\|_{L^{\infty}(\RR)}$.

\medskip

In this section, we assume   that

 i) $A_0$ is the identity operator and the operators $\{{\mathcal A}_t\}_{t>0}$
form a semigroup, that is, for any $t, s>0$ and $f\in {\mathcal M}$,
${\mathcal A}_{t}{\mathcal A}_{s}f(x)={\mathcal A}_{t+s}f(x)$
for almost all $x\in{\RR}$.

ii) There exists some  $0<\epsilon'\leq \epsilon$ such that
the kernels ${\mathcal A}_t$ satisfy the estimate
\begin{eqnarray}\label{e2.6}
  \left| t{\partial\over \partial t} {\mathcal A}_t(x,y)\right|
  \leq C t^{-n} \left(1+{|x-y|\over t}\right)^{-(n+\epsilon')}.
\end{eqnarray}

Examples of operators $\{{\mathcal A}_t\}_{t>0}$  which satisfy conditions i) and ii) above include
the Poisson and heat kernels of certain    operators including
 Schr\"odinger operators   with nonnegative potentials  and second order divergence form elliptic operators
   (see for example, \cite{DY1, DY2,  DYZ,  DGMTZ, MSTZ, Shen, St}).

 \begin{prop} \label{prop2.4} Assume that $\{{\mathcal A}_t\}_{t>0}$ is  a generalized approximation
to the identity with properties i) and ii) above.
 If $\mu$ is a Carleson measure on ${\Bbb R}^{n+1}_+$, then the function
 \begin{eqnarray}\label{e2.7}
 S_{\mu, \, {\mathcal A}}(x)=\int_{{\mathbb R}^{n+1}_+}  {\mathcal A}_t(x,y) d\mu(y, t)
 \end{eqnarray}
 belongs to $  {\rm BMO}_{\mathcal{A}}(\RR)$
 with $\|S_{\mu, {\mathcal A}}\|_{{\rm BMO}_{\mathcal{A}}(\RR)} \leq C(n)\||\mu\||_c.$
 \end{prop}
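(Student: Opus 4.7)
The plan is to bound, for an arbitrary cube $Q \subset \mathbb R^n$ with sidelength $\ell = \ell(Q)$,
$$ \mathrm{I}(Q) := \frac{1}{|Q|} \int_Q \bigl|S_{\mu,\mathcal A}(x) - \mathcal A_\ell S_{\mu,\mathcal A}(x)\bigr|\, dx $$
by $C\, |||\mu|||_c$. The first move is to use the semigroup property (i) together with Fubini (justified by the size bound \eqref{e2.4}) to rewrite $\mathcal A_\ell S_{\mu,\mathcal A}(x) = \int \mathcal A_{t+\ell}(x,y)\, d\mu(y,t)$, so that the difference telescopes into
$$ S_{\mu,\mathcal A}(x) - \mathcal A_\ell S_{\mu,\mathcal A}(x) = \int_{\mathbb R^{n+1}_+} \bigl[\mathcal A_t(x,y) - \mathcal A_{t+\ell}(x,y)\bigr]\, d\mu(y,t). $$
I would then split $\mu = \mu_1 + \mu_2$ with $\mu_1 = \mu|_{\widehat{2Q}}$ and $\mu_2 = \mu - \mu_1$. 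For the local piece $\mu_1$, \eqref{e2.4} yields $\sup_{s,y}\|\mathcal A_s(\cdot,y)\|_{L^1(\mathbb R^n)} \leq C$, so Fubini combined with the Carleson hypothesis gives
$$ \int_Q \bigl(|S_{\mu_1,\mathcal A}(x)| + |\mathcal A_\ell S_{\mu_1,\mathcal A}(x)|\bigr) dx \leq C\, |\mu|\bigl(\widehat{2Q}\bigr) \leq C\, |||\mu|||_c\, |Q|. $$

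For the non-local piece I would exploit the cancellation via the fundamental theorem of calculus and the derivative bound \eqref{e2.6}, which after rewriting yields
$$ \bigl|\mathcal A_t(x,y) - \mathcal A_{t+\ell}(x,y)\bigr| \leq C \int_t^{t+\ell} \frac{s^{\epsilon'-1}}{(s+|x-y|)^{n+\epsilon'}}\, ds. $$
Decompose $(\widehat{2Q})^c$ into dyadic annular tents $T_j = \widehat{2^{j+1}Q} \setminus \widehat{2^j Q}$ for $j \geq 1$. A short case analysis shows that for every $x \in Q$, $(y,t) \in T_j$, and $s \in [t, t+\ell]$ one has $s + |x-y| \gtrsim 2^j \ell$: either $y \notin 2^j Q$ forces $|x-y| \gtrsim 2^j \ell$, or else $t \geq 2^j \ell$ forces $s \geq 2^j \ell$. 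Together with the elementary estimate $\int_t^{t+\ell} s^{\epsilon'-1}\, ds \leq C\, \ell^{\epsilon'}$ (handled by two cases $t \geq \ell$ and $t < \ell$), this produces the pointwise bound
$$ \bigl|\mathcal A_t(x,y) - \mathcal A_{t+\ell}(x,y)\bigr| \leq \frac{C}{2^{j(n+\epsilon')}\, \ell^n}, \qquad (y,t) \in T_j,\ x \in Q. $$
Integrating over $Q$ (where $|Q| = \ell^n$), using the annular Carleson estimate $|\mu|(T_j) \leq C\, 2^{jn}\, |||\mu|||_c\, |Q|$, and summing the geometric series $\sum_{j \geq 1} 2^{-j\epsilon'} < \infty$ delivers the required bound $C\, |||\mu|||_c\, |Q|$ on the non-local contribution.

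The main obstacle I anticipate is extracting the decay factor $2^{-j\epsilon'}$ across the dyadic scales; this is precisely where the strict positivity of $\epsilon'$ in hypothesis (ii) is indispensable, since a naive triangle-inequality estimate of $\mathcal A_t - \mathcal A_{t+\ell}$ from the size bound \eqref{e2.4} alone would produce a logarithmically divergent $j$-sum. Everything else is bookkeeping — handling the endpoint case $t < \ell$ carefully in the $s$-integration, verifying the annular Carleson bounds, and checking uniformity in $Q$.
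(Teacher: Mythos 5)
Your proposal is correct and follows essentially the same route as the paper's proof: the semigroup identity $\mathcal A_\ell S_{\mu,\mathcal A}=\int \mathcal A_{t+\ell}\,d\mu$, the split into the local tent $\widehat{2Q}$ handled by the size bound plus the Carleson condition, and the non-local part handled via $\mathcal A_t-\mathcal A_{t+\ell}=-\int_0^{\ell}\partial_s\mathcal A_{s+t}\,ds$, the derivative bound \eqref{e2.6}, dyadic annular tents with $s+t+|x-y|\gtrsim 2^j\ell$, and the geometric series in $2^{-j\epsilon'}$. No substantive differences.
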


 \begin{proof} Let $Q$ be a cube with the center $x_Q$ and its sidelength $\ell(Q)$. Let $Q_k=2^k Q$.
It follows from the assumption (i) of  $\{{\mathcal A}_t\}_{t>0}$ that
 \begin{align} \label{e2.8}
 \int_{Q}|S_{\mu, {\mathcal A}}(x)-{\mathcal A}_{\ell(Q)}S_{\mu, {\mathcal A}}(x)|dx
 &\leq   \int_{Q}\int_{{\mathbb R}^{n+1}_+}  |{\mathcal A}_t(x,z)- {\mathcal A}_{\ell(Q)}
 {\mathcal A}_t(x,z)| d\mu(z, t)dx\\
  &\leq  \int_{Q}\int_{{\widehat Q_{1}}}  |{\mathcal A}_t(x,z)- {\mathcal A}_{t+\ell(Q)}(x,z)| d\mu(z, t)dx\nonumber\\
 &\quad+\int_{Q}  \int_{\mathbb R^{n+1}_+\backslash  {\widehat Q_{1}} }
 |{\mathcal A}_t(x,z)- {\mathcal A}_{t+\ell(Q)} (x,z)| d\mu(z, t)dx=I+II.\nonumber
\end{align}
For the term $I,$ one easily sees that
 \begin{eqnarray} \label{e2.9}
   I&=& \int_{{\widehat Q_{1}}}  \int_{Q} |{\mathcal A}_t(x,z)- {\mathcal A}_{t+\ell(Q)}(x,z)|
   dx  d\mu(z, t)
 \leq  C|{\widehat Q_{1}}|_{\mu}\leq C |Q|.
\end{eqnarray}

Consider the term $II.$ We apply the formula:
 \begin{eqnarray*}
{\mathcal A}_t(x,z)- {\mathcal A}_{t+\ell(Q)}(x,z)=-\int_0^{\ell(Q)} \partial_s {\mathcal A}_{s+t}(x,z)ds.
\end{eqnarray*}
  This, together with the assumption  (ii) of  $\{{\mathcal A}_t\}_{t>0}$, yields
  that for some $0<\epsilon'\leq \epsilon,$
 \begin{align} \label{e2.10}
 II&\leq \int_Q\int_0^{\ell(Q)} \int_{\mathbb R^{n+1}_+\backslash  {\widehat Q_{1}} }
 |\partial_s {\mathcal A}_{s+t}(x,z)|\, d\mu(z, t) ds dx\\
 &= \sum_{k=1}^{\infty}  \int_Q \int_0^{\ell(Q)} \int_{{\widehat Q_{k+1}} \backslash
{\widehat Q_{k}} } |\partial_s {\mathcal A}_{s+t}(x,z)|\, d\mu(z, t) ds dx\nonumber\\
&\leq  C\sum_{k=1}^{\infty}  \int_Q\int_0^{\ell(Q)} \int_{{\widehat Q_{k+1}} \backslash
{\widehat Q_{k}} } {1\over s+t} {(s+t)^{\epsilon'} \over (s+t+ |x-z|)^{n+\epsilon'}}
 \, d\mu(z, t) ds dx\nonumber\\
 &=:  \sum_{k=1}^{\infty} II_k.\nonumber
\end{align}
Notice that for every $x\in Q$, $0<t<\ell(Q)$ and $(s, z)\in
{\widehat Q_{k+1}} \backslash {\widehat Q_{k}}, k=1, 2, \cdots $, we have that
$
s+t+ |x-z|\geq 2^{k}\ell(Q).
$
It tells us that
 \begin{align*}
 II_k &\leq C (2^k\ell(Q))^{-(n+\epsilon')}
 \int_{Q}\int_0^{\ell(Q)} \int_{{\widehat Q_{k+1}} \backslash
{\widehat Q_{k}} }   {(s+t)^{\epsilon'-1}  }
 \, d\mu(z, t) ds dx\nonumber\\
 &\leq C |Q| (2^k\ell(Q))^{-(n+\epsilon')} \ell(Q)^{\epsilon'} |2^kQ|\nonumber\\
  &\leq 2^{-k\epsilon'}|Q|,
\end{align*}
which shows that $II\leq \sum_{k=1}^{\infty}2^{-k\epsilon'}|Q| \leq C|Q|.$ This, in combination with
\eqref{e2.9}, yields the desired result. This completes the proof.
\end{proof}

%

 \medskip

\section{Proof of Theorem~\ref{th1.1}}
 \setcounter{equation}{0}


Let ${\rm BMO}_{\mathcal P}(\RR)$ denote the BMO space associated with the semigroup
$\{e^{-t\sqrt{\L}}\}_{t>0}$ defined  in Definition~\ref{def2.3}.
Under the assumption of $V\in B_q$ for some $q\geq n/2, $  it is known (see \cite[Proposition 6.1]{DY2})
that the spaces ${\rm BMO}_{\mathcal L}(\RR)$ and
${\rm BMO}_{\mathcal P}(\RR)$ coincide, and their norms are equivalent.
From this,  (i) of Theorem~\ref{th1.1}
is a straightforward consequence of
Proposition~\ref{prop2.4}.



We now show  (ii) of Theorem~\ref{th1.1}.
Throughout this section,  we will  assume that $ f\in {\rm BMO_\L}(\RR) $  with
 ${\rm supp} f \subseteq \{ x= (x_1, \cdots, x_n):\  \ |x_i|\leq 1, i=1, \cdots, n\}$ and set $ Q_0=\{ x= (x_1, \cdots, x_n):\  \ |x_i|\leq 2, i=1, \cdots, n\}$.
 Write
$$u(x, t)={\mathcal P}_tf(x) = e^{-t\sqrt{\L}} f(x).
$$
 We will use two facts about the space
${\rm BMO}_{\L}(\RR)$(see \cite{DYZ, DGMTZ, MSTZ})  under the assumption of $V\in B_q$, $q\geq n$:

(1) Let $Q=Q(x, t)$ denote the cube with center $x$ and sidelength  $t$. Then we have
  \begin{eqnarray}\label{e3.1}
  |  {\mathcal P}_t(f-f_Q)(x) | =  |  e^{-t\sqrt{\L}}(f-f_Q)(x) | \leq C\|f\|_{{\rm BMO_\L}(\RR)}.
\end{eqnarray}

(2) Let $\nabla$ denote the full gradient in ${\Bbb R}^{n+1}_+$. We have  that for all $(x,t)\in {\Bbb R}^{n+1}_+,$
\begin{eqnarray}\label{e3.2}
 \left | t\nabla u(x, t)\right | = \left | t\nabla e^{-t\sqrt{\L}}(f)(x)\right |\leq C \|f\|_{{\rm BMO_\L}(\RR)}.
\end{eqnarray}

In the sequel, for a cube $Q\subset \Bbb R^n$ we let $z_Q=(x_Q, t_Q)$
with $t_Q=\ell(Q)$,
where $x_Q$ is the center of $Q$ ($z_Q$ is the center of the top face of ${\hat Q}$).
 We define generations, ${\bf G}_k,$ of subcubes of $Q_0$
as follows:
\begin{align*}
{\bf G}_0&=\{Q_0\};\\
{\bf G}_{k+1}&=\left\{Q'\subseteq Q\in {\bf G}_k: \ Q'  \  \mbox{ maximal dyadic such that }\ \left|
u(x_Q, t_Q)
-u(x_{Q'}, t_{Q'})\right| >A \right\}
 \end{align*}
for $k\geq 0$, where $A$ is a large constant to be chosen later.

In the sequel, for every $Q\in {\bf G}_k, k\geq 0 $  set
\begin{eqnarray}\label{e3.9}
 \Sigma_Q={\widehat Q}\  {\Big \backslash}
\bigcup_{\substack{Q'\subset Q\in {\bf G}_{k}\\ Q'\in {\bf G}_{k+1}} } {\widehat Q}',
\end{eqnarray}
and  $\partial\Sigma_Q$ denotes the boundary of $\Sigma_Q.$
Then the following result holds.

\begin{lemma}\label{le3.1} We have the properties:

\begin{itemize}
\item[(i)] There exists a constant $C>0$ such that for every $(x,t)\in  \Sigma_Q\cup \partial \Sigma_Q,$
$$
|u(x, t)-u(x_Q, t_Q) |\leq A +C\, \|f\|_{{\rm BMO_\L}(\RR)}.
$$

\item[(ii)] For every $Q\in {\bf G}_k, k\geq 0,$
$$
\sum_{\substack{Q'\subset Q\in {\bf G}_{k}\\ Q'\in {\bf G}_{k+1}} } |Q'|\leq CA^{-1}|Q|\, \|f\|_{{\rm BMO_\L}(\RR)}.
$$
As a consequence,  if  $A$ is large enough, then
$\sum_{\substack{Q'\subset Q\in {\bf G}_{k}\\ Q'\in {\bf G}_{k+1}} } |Q'|\leq
 |Q|/2$.

\end{itemize}
\end{lemma}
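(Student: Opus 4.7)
\emph{Proof plan.} For part (i), my plan is to replace the continuous point $(x,t)\in\Sigma_Q$ by a dyadic cube $\tilde Q\subseteq Q$ whose Carleson tent $\widehat{\tilde Q}$ contains $(x,t)$ and which itself sits outside $\mathbf{G}_{k+1}$. Concretely, I would take $\tilde Q$ to be the unique dyadic subcube of $Q$ containing $x$ with sidelength $\ell(\tilde Q)\in(t,2t]$, so that $(x,t)\in\widehat{\tilde Q}$ and $t_{\tilde Q}\sim t$. Since $(x,t)\in\Sigma_Q$ is excluded from every $\widehat{Q'}$ with $Q'\in\mathbf{G}_{k+1}$, no ancestor of $\tilde Q$ inside $Q$ can itself belong to $\mathbf{G}_{k+1}$; by the maximality clause in the definition of $\mathbf{G}_{k+1}$, this forces the non-stopping inequality $|u(x_{\tilde Q},t_{\tilde Q})-u(x_Q,t_Q)|\le A$.

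To pass from $(x_{\tilde Q},t_{\tilde Q})$ to $(x,t)$, I would invoke \eqref{e3.2}, which gives $|\nabla u(y,s)|\le C\|f\|_{{\rm BMO}_{\mathcal{L}}(\RR)}/s$. Joining the two points by a short polygonal path (first vertically at first coordinate $x$ from height $t$ to height $t_{\tilde Q}$, then horizontally at height $t_{\tilde Q}$), the path has total length $\lesssim t$ while its vertical coordinate stays in the interval $[t,2t]$, yielding $|u(x,t)-u(x_{\tilde Q},t_{\tilde Q})|\le C\|f\|_{{\rm BMO}_{\mathcal{L}}(\RR)}$. The triangle inequality then gives (i), and the extension to $\partial\Sigma_Q$ follows from the continuity of $u$ in $\mathbb R^{n+1}_+$.

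For part (ii), the plan is to convert the Poisson-extension stopping condition into a statement about cube averages of $f$ and then sum. Since $Q'$ is exactly the cube centered at $x_{Q'}$ with sidelength $t_{Q'}=\ell(Q')$, applying \eqref{e3.1} at $(x_{Q'},t_{Q'})$ gives $|u(x_{Q'},t_{Q'})-f_{Q'}|\le C\|f\|_{{\rm BMO}_{\mathcal{L}}(\RR)}$, and similarly $|u(x_Q,t_Q)-f_Q|\le C\|f\|_{{\rm BMO}_{\mathcal{L}}(\RR)}$. Combined with the stopping-time inequality $|u(x_{Q'},t_{Q'})-u(x_Q,t_Q)|>A$, this forces $|f_{Q'}-f_Q|>A-2C\|f\|_{{\rm BMO}_{\mathcal{L}}(\RR)}$, which is bounded above by $\frac{1}{|Q'|}\int_{Q'}|f-f_Q|\,dx$. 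Summing over the pairwise disjoint stopping cubes $Q'\in\mathbf{G}_{k+1}$ with $Q'\subset Q$ and invoking Lemma~\ref{le2.2}(ii) with $p=1$ to bound $\int_Q|f-f_Q|\,dx\le C|Q|\|f\|_{{\rm BMO}_{\mathcal{L}}(\RR)}$, I obtain the desired estimate $\sum|Q'|\le CA^{-1}|Q|\|f\|_{{\rm BMO}_{\mathcal{L}}(\RR)}$ (after absorbing the additive $2C\|f\|_{{\rm BMO}_{\mathcal{L}}(\RR)}$ into $A$, or noting that otherwise the bound is trivial); the second assertion then follows by choosing $A$ large relative to $\|f\|_{{\rm BMO}_{\mathcal{L}}(\RR)}$.

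The main obstacle I expect is in part (i): arranging the interpolating path from $(x,t)$ to $(x_{\tilde Q},t_{\tilde Q})$ so that the vertical coordinate never drops below a constant multiple of $t$, since \eqref{e3.2} degenerates as $s\downarrow 0$. The dyadic choice $\ell(\tilde Q)\in(t,2t]$ is designed precisely so that the vertical coordinate remains comparable to $t$ along the path and the pointwise gradient bound integrates to a uniform constant. Everything else is essentially bookkeeping, combining the stopping-time definition of $\mathbf{G}_{k+1}$ with the already-stated inequalities \eqref{e3.1}, \eqref{e3.2}, and Lemma~\ref{le2.2}(ii).
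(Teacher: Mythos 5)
Your part (i) is essentially the paper's own argument: the paper decomposes $\widehat Q$ into top halves $T(J)$ of dyadic subcubes, notes that the $J$ whose top half contains $(x,t)\in\Sigma_Q$ cannot be contained in any cube of ${\bf G}_{k+1}$ (so $|u(x_J,t_J)-u(x_Q,t_Q)|\le A$ by maximality), and then uses \eqref{e3.2} to move from $(x_J,t_J)$ to $(x,t)$ within $T(J)$, where the height stays comparable to $t$. Your $\tilde Q$ with $\ell(\tilde Q)\in(t,2t]$ is exactly this $J$, and your path argument is the same gradient estimate; this part is fine.

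Part (ii) has a genuine gap at the step ``applying \eqref{e3.1} at $(x_{Q'},t_{Q'})$ gives $|u(x_{Q'},t_{Q'})-f_{Q'}|\le C\|f\|_{{\rm BMO}_{\mathcal{L}}(\RR)}$.'' What \eqref{e3.1} controls is $\mathcal P_{t_{Q'}}(f-f_{Q'})(x_{Q'})=u(x_{Q'},t_{Q'})-\mathcal P_{t_{Q'}}(1)(x_{Q'})\,f_{Q'}$, and for a Schr\"odinger operator $\mathcal P_t(1)\neq 1$; the introduction flags this failure of the conservation property as the main obstacle of the whole paper. Your step silently discards the term $|f_{Q'}|\,|1-\mathcal P_{t_{Q'}}(1)(x_{Q'})|$ (and likewise for $Q$). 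That term is not harmless a priori, because $|f_{Q'}|$ can be as large as $\log(\rho(x_{Q'})/t_{Q'})$ times the norm by Lemma~\ref{le2.2}(i); one must pair this logarithmic growth against the decay $|1-\mathcal P_t(1)(x)|\lesssim (t/\rho(x))^{\beta}$ coming from Lemma~\ref{le2.1}(iii) when $t<\rho(x)$, and use the bound $|f_Q|\le C\|f\|_{{\rm BMO}_{\mathcal{L}}(\RR)}$ from \eqref{e1.8} when $t\ge\rho(x)$. This case analysis is precisely the content of the terms $II$ and $III$ in the paper's proof of (ii) and is the only place where the argument genuinely differs from the classical Carleson--Wilson setting. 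The remainder of your part (ii) (disjointness of the stopping cubes, the $L^1$ oscillation bound, absorbing the additive constant into $A$) is correct and equivalent to the paper's handling of its term $I$, so once you insert the missing estimate on $f_Q\,(\mathcal P_{t_Q}(1)(x_Q)-\mathcal P_{t_{Q'}}(1)(x_{Q'}))$ the proof closes.
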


\begin{proof} For a cube $J$, we set $T(J)$ the top half of ${\widehat J}, $
i.e.,
$
T(J)=\{ (x,t):  x\in J, \ell(J)/2\leq t<\ell(J)\}.
$
We notice that, for every dyadic cube $I,$
$$
{\widehat I}=\bigcup_{\substack{J\subset I \\ J\in {\mathcal D} } } T(J),
$$
and therefore, if $Q\in {\bf G}_k, \Sigma_Q$ is the union of all the sets $T(J)$ such that
$J\subseteq Q$ is dyadic and $J$ is not a subset of any $Q'\in {\bf G}_{k+1}.$
If $(x,t)\in \Sigma_Q,$ then $(x,t)$ lies in some $T(J)$ as described above. But then
$|u(x_Q, t_Q) - u(x_J, t_J) |\leq A$ because if it were
$|u(x_Q, t_Q) - u(x_J, t_J)|> A$, $J$ would belongs to
${\bf G}_{k+1}$ or would be contained in some cube in ${\bf G}_{k+1}$, i.e., a maximal
cube for the property. But then $(x,t)$ would not line in $\Sigma_Q.$ On the other hand,
 the fact \eqref{e3.2} implies  that for every $(x,t)\in T(J),$
$$
|u(x, t) - u(x_J, t_J) |\leq C\|f\|_{{\rm BMO_\L}(\RR)}.
$$
All together, yields the desired result for $(x,t)\in \Sigma_Q$. Taking limits, this also holds for $
(x, t)\in \partial \Sigma_Q$. This proves  (i).

Let us show (ii).  For every $k,$ it follows from the definition of ${\bf G}_{k}$
  that
\begin{eqnarray}\label{e3.4}
 \sum_{\substack{Q'\subset Q\in {\bf G}_{k}\\ Q'\in {\bf G}_{k+1}} } |Q'|  \leq   {1\over A}
\sum_{\substack{Q'\subset Q\in {\bf G}_{k}\\ Q'\in {\bf G}_{k+1}} }
  |Q'| |u(x_Q, t_Q)  - u(x_{Q'}, t_{Q'})|
 \end{eqnarray}
Observe  that $  u(x_Q, t_Q)={\mathcal P}_{t_Q}f(x_Q)=
 {\mathcal P}_{t_Q}(f-f_Q)(x_Q)+ {\mathcal P}_{t_Q} (1)(x_{Q})  f_{Q}$. We obtain
\begin{eqnarray*}
\mbox{RHS of \eqref{e3.4}}
&\leq&   {1\over A}
\sum_{\substack{Q'\subset Q\in {\bf G}_{k}\\ Q'\in {\bf G}_{k+1}} }
  |Q'|\, \Big(| {\mathcal P}_{t_Q}(f-f_Q)(x_Q)|+| {\mathcal P}_{t_{Q'}}(f-f_{Q'})(x_{Q'})| + |{\mathcal P}_{t_{Q'}} (1)(x_{Q'})|  |f_Q-f_{Q'}|\Big)\nonumber
\\&&\quad + {1\over A}
\sum_{\substack{Q'\subset Q\in {\bf G}_{k}\\ Q'\in {\bf G}_{k+1}} }
  |Q'|\, |f_Q| \Big|     {\mathcal P}_{t_Q} (1)(x_{Q})  -     {\mathcal P}_{t_{Q'}} (1)(x_{Q})  \Big|\nonumber
\\&&\quad + {1\over A}
\sum_{\substack{Q'\subset Q\in {\bf G}_{k}\\ Q'\in {\bf G}_{k+1}} }
 |Q'|\,   |f_Q| \,  | {\mathcal P}_{t_{Q'}} (1)(x_Q)- {\mathcal P}_{t_{Q'}} (1)(x_{Q'}) |\nonumber
\\
 &=&: I+II+III. \nonumber
  \end{eqnarray*}

We first note that
$I\leq  {C\over A} |Q|\, \|f\|_{{\rm BMO}_{\mathcal{L}}(\RR)}$. In fact, this follows from the estimate in \eqref{e3.1}
and from the facts that for every $t>0$ and $x\in\mathbb R^n$, $|{\mathcal P}_{t} (1)(x)|\leq C$ and that for every $Q'\subset Q$, $|f_Q -f_{Q'}|\leq |Q'|^{-1}\int_{Q'}|f -f_{Q}|dx \leq C \|f\|_{{\rm BMO}_{\mathcal{L}}(\RR)}$.

To estimate  the term $II,$  we first
 assume   that $t_Q\geq  \rho(x_Q)$. Then from   the facts that  that for every $t>0$ and $x\in\mathbb R^n$, $|{\mathcal P}_{t} (1)(x)|\leq C$ and that $\abs{f_{Q}}\le C \norm{f}_{\rm BMO_\L(\RR)}$,
the term $II$ is bounded by
 \begin{eqnarray}\label{e3.5}
 {1\over A}
\sum_{\substack{Q'\subset Q\in {\bf G}_{k}\\ Q'\in {\bf G}_{k+1}} }
  |Q'|\,  \norm{f}_{\rm BMO_\L(\RR)} \leq  {C\over A} |Q|\, \|f\|_{{\rm BMO_\L}(\RR)},
 \end{eqnarray}
where the last inequality follows from the fact that these $Q'$'s are pairwise disjoint.

Next we consider the case  $t_Q<\rho(x_Q)$. From \eqref{e2.1}, we have that $\rho(x)
\sim \rho(x_Q)$ for $x\in Q$.
  By (iii) of Lemma~\ref{le2.1},
 \begin{align*}
|{\mathcal P}_{t_Q} (1) (x_{Q}) - {\mathcal P}_{t_{Q'}}(1) (x_{Q}) |
 &=\left|\int_{t_{Q'}}^{t_Q} s\partial_s e^{-s\sqrt{\L}} (1)(x_{Q}) {ds\over s}\right|\\
 &\leq    C \int_{t_{Q'}}^{t_Q}  \left( {s\over \rho(x_Q)}\right)^{\beta} {ds\over s}
 \\
 &\leq         C\left( {t_Q\over \rho(x_Q)}\right)^{\beta},
 \end{align*}
  which, together with (i) of Lemma~\ref{le2.2}, implies
 \begin{align}\label{e3.6}
\sum_{\substack{Q'\subset Q\in {\bf G}_{k}\\ Q'\in {\bf G}_{k+1}} } {1\over A}
 |Q'|\, \,   |f_Q| \,  |  {\mathcal P}_{t_Q} (1) (x_Q) - {\mathcal P}_{t_{Q'}}(1) (x_{Q})   |
&\leq {C\over A} \left( {t_Q\over \rho(x_Q)}\right)^{\beta} \abs{f_{Q}}
\sum_{\substack{Q'\subset Q\in {\bf G}_{k}\\ Q'\in {\bf G}_{k+1}} } \int_{Q'}  dy\\
&\leq
 {C\over A} |Q|\,  \abs{f_{Q}}    \left( {t_Q\over \rho(x_Q)}\right)^{\beta}\nonumber\\
 &\leq
 {C\over A} |Q|\, \|f\|_{{\rm BMO_\L}(\RR)}
 \Big(1+\log {\rho(x_Q)\over t_Q}\Big)   \left( {t_Q\over \rho(x_Q)}\right)^{\beta}\nonumber\\
  &\leq
 {C\over A} |Q|\, \|f\|_{{\rm BMO_\L}(\RR)}.\nonumber
\end{align}

For the term $III$, we follow an argument as that in the term $II$ and
 the fact \eqref{e3.2}  to show that in this case,
 $ III\leq   {C\over A} |Q|\, \|f\|_{{\rm BMO_\L}(\RR)}.
 $
Therefore,
\begin{eqnarray}\label{emm}
\sum_{\substack{Q'\subset Q\in {\bf G}_{k}\\ Q'\in {\bf G}_{k+1}} } |Q'|
\leq {C\over A} |Q|\, \|f\|_{{\rm BMO_\L}(\RR)} \leq {1\over 2}|Q|,
\end{eqnarray}
by choosing $A$  large enough. This ends the proof of (ii) of Lemma~\ref{le3.1}.  The proof   is complete.
\end{proof}

In the sequel, we fix a constant $A>0$ large enough so that \eqref{emm} holds.
 By (ii) of Lemma~\ref{le2.2}, we know that $f\in L^2_{\rm loc}(\RR)$. Since $\supp f\subset Q_0=\{ x= (x_1, \cdots, x_n):\  \ |x_i|\leq 2, i=1, \cdots, n\},$
 $f$ belongs to $L^2(\RR).$ It follows by the spectral theory (\cite{St})
   that
   \begin{eqnarray}\label{e3.7}
f(x)=  2\int_0^{\infty} \sqrt{\L} e^{-2t\sqrt{\L}} f(x)dt,
\end{eqnarray}
where the improper integral converges in $L^2(\RR)$.
Note that both functions ${\mathcal P}_t(x, y)$ and $ u(y,t)$ are $C^1(\RR)$ in $y$. Since $-u_{tt} +\L u=0,$ we apply
  Green's theorem to obtain
\begin{align*}
0&= -
\iint_{{\mathbb R}^{n+1}_+} t{d^2\over dt^2} {\mathcal P}_t(x,y) u(y,t) dydt +
 \iint_{{\mathbb R}^{n+1}_+} t {\mathcal P}_t(x,y) (-\Delta +V) u(y,t) dydt\\
&=
\iint_{{\mathbb R}^{n+1}_+}  {d\over dt} {\mathcal P}_t(x,y)u(y,t) dydt +
 \iint_{{\mathbb R}^{n+1}_+} t \nabla {\mathcal P}_t(x,y)\  \nabla u(y,t) dydt\\
 &\quad+
 \iint_{{\mathbb R}^{n+1}_+} t {\mathcal P}_t(x,y)  V(y) u(y,t)  dydt.
\end{align*}
By \eqref{e3.7}, it follows that
\begin{eqnarray}\label{e3.8}
 f(x)= 2\iint_{{\mathbb R}^{n+1}_+} \left( t \nabla {\mathcal P}_t(x,y) \nabla u(y,t)
 + t {\mathcal P}_t(x,y)  V(y) u(y,t) \right)dydt,
\end{eqnarray}
where the improper integral converges in $L^2(\RR)$.

We are now going to cut up the integral in \eqref{e3.7}. For every $Q\in {\bf G}_k, k\geq 0,$
we recall that $\Sigma_Q$ is defined in \eqref{e3.9}.
Observe  that if $Q\not =\cap Q',$ then $\Sigma_Q\cap \Sigma_{Q'}=\emptyset$. It follows that
\begin{eqnarray} \label{e3.10}
{\mathbb R}^{n+1}_+ =  {\widehat Q}_0  \cup   \left({\mathbb R}^{n+1}_+\backslash {\widehat Q}_0\right)
 = \left( \bigcup_{k\geq 0} \bigcup_{Q\in {\bf G}_k} \Sigma_{Q} \right) \bigcup
  \left({\mathbb R}^{n+1}_+\backslash {\widehat Q}_0\right).
\end{eqnarray}
From this, we rewrite
\begin{eqnarray}\label{e3.11}
 f(x)&= &  \sum_{k\geq 0} \sum_{Q\in {\bf G}_k}2\iint_{\Sigma_{Q}} \left( t \nabla {\mathcal P}_t(x,y) \nabla u(y,t)
 + t {\mathcal P}_t(x,y)  V(y) u(y,t) \right) dydt\\
& & + 2\iint_{{\mathbb R}^{n+1}_+\backslash {\widehat Q}_0} \left( t \nabla {\mathcal P}_t(x,y) \nabla u(y,t)
 + t {\mathcal P}_t(x,y)  V(y) u(y,t) \right)dydt\nonumber\\
&=:& \left( \sum_{k\geq 0} \sum_{Q\in {\bf G}_k} f_Q(x)\right) + f_2(x)
 =:    f_1(x) + f_2(x).\nonumber
\end{eqnarray}

We will deal with $f_1(x)$ first.  Since $-u_{tt} +\L u=0,$ we
 apply Green's theorem to each $f_Q$ of the summands in $f_1$ to obtain
\begin{align}\label{e3.12}
f_Q(x)
&= u(y_Q,t_Q)  \iint_{\Sigma_Q} t {\mathcal P}_t(x,y)
  V(y)     dydt\nonumber\\
   &\quad +\left\{
 \iint_{\partial \Sigma_Q} t {\partial \over \partial \nu }  {\mathcal P}_t(x,y)  \,
  \left(u(y,t)-u(y_Q,t_Q)\right) d\sigma_Q\right.
  +\iint_{\partial \Sigma_Q}  t   {\mathcal P}_t(x,y) \,  {\partial \over \partial \nu }  u(y,t)     d\sigma_Q \\
 &\quad\left. -\iint_{\partial \Sigma_Q} {\mathcal P}_t(x,y) \,
\left(u(y,t)-u(y_Q, t_Q) \right)   {\partial t \over \partial \nu }d\sigma_Q\right\}\nonumber\\
  &=: I_Q(x) +II_Q(x). \nonumber
 \end{align}

\medskip

\begin{lemma}\label{3.2}  There exists a constant $C>0$ such that for all $x\in{\Bbb R}^n,$
$$
  \sum_Q   | I_Q(x)|   \leq CA+ C \|f\|_{{\rm BMO_\L}(\RR)}.
$$
\end{lemma}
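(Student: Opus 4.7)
The strategy is to exploit the disjoint decomposition $\widehat{Q_0} = \bigsqcup_{k \ge 0}\bigsqcup_{Q\in \mathbf{G}_k} \Sigma_Q$ recorded in \eqref{e3.10} in order to collapse the sum over generations into a single integral over $\widehat{Q_0}$, and then to estimate that integral using \eqref{e2.3} together with a BMO-type pointwise bound on $u(y,t)$.

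First I would apply (i) of Lemma~\ref{le3.1}, which gives that for every $(y,t)\in \Sigma_Q$,
$$|u(y_Q,t_Q)|\le |u(y,t)|+A+C\|f\|_{{\rm BMO_\L}(\RR)}.$$
Multiplying by the nonnegative factor $t\mathcal{P}_t(x,y)V(y)$ and integrating over $\Sigma_Q$ gives
$$|I_Q(x)|\le \iint_{\Sigma_Q}\bigl(|u(y,t)|+A+C\|f\|_{{\rm BMO_\L}(\RR)}\bigr)\,t\mathcal{P}_t(x,y)V(y)\,dy\,dt.$$
Since the $\Sigma_Q$ are pairwise disjoint with union $\widehat{Q_0}$, summing over all $Q$ in all generations yields
$$\sum_Q|I_Q(x)|\le \iint_{\widehat{Q_0}}\bigl(|u(y,t)|+A+C\|f\|_{{\rm BMO_\L}(\RR)}\bigr)\,t\mathcal{P}_t(x,y)V(y)\,dy\,dt.$$
It thus suffices to bound the two integrals on the right uniformly in $x$.

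Next I would establish the pointwise bound
$$|u(y,t)|\le C\bigl(1+\log^{+}(\rho(y)/t)\bigr)\|f\|_{{\rm BMO_\L}(\RR)}.$$
With $Q(y,t)$ the cube centred at $y$ of sidelength $t$, write $u(y,t)=\mathcal{P}_t(f-f_{Q(y,t)})(y)+\mathcal{P}_t(1)(y)\,f_{Q(y,t)}$: the first summand is $O(\|f\|_{{\rm BMO_\L}(\RR)})$ by \eqref{e3.1}; the factor $|\mathcal{P}_t(1)(y)|\le 1$ because $0\le \mathcal{P}_t(y,z)\le p_t(y-z)$; and $|f_{Q(y,t)}|$ has the claimed logarithmic growth by (i) and (ii) of Lemma~\ref{le2.2} (combined with a short argument to pass from $f_{2Q(y,t)}$ to $f_{Q(y,t)}$). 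Feeding this into \eqref{e2.3} gives
$$\int_{\RR}\bigl(1+\log^{+}(\rho(y)/t)\bigr)\,t\mathcal{P}_t(x,y)V(y)\,dy\le C\,t^{-1}\min\Bigl\{(t/\rho(x))^{\delta},\,(t/\rho(x))^{-N/2+2}\Bigr\},$$
and after the substitution $s=t/\rho(x)$ the resulting $t$-integral over $(0,\infty)$ is a universal constant. Hence
$$\iint_{\widehat{Q_0}}|u(y,t)|\,t\mathcal{P}_t(x,y)V(y)\,dy\,dt\le C\|f\|_{{\rm BMO_\L}(\RR)},$$
and the analogous estimate without the logarithmic factor bounds $\iint_{\widehat{Q_0}}t\mathcal{P}_t(x,y)V(y)\,dy\,dt$ by a universal constant. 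Combining, $\sum_Q|I_Q(x)|\le CA+C\|f\|_{{\rm BMO_\L}(\RR)}$.

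The main technical obstacle is the mismatch between the critical radius $\rho(y)$, which controls $f_{Q(y,t)}$, and the radius $\rho(x)$, which appears naturally when integrating the kernel $t\mathcal{P}_t(x,y)V(y)$. A naive size bound on $\mathcal{P}_t(x,y)$ would not absorb the logarithmic growth of $f_{Q(y,t)}$ in $\rho(y)/t$; it is exactly the refined estimate \eqref{e2.3}, which is uniform in $y$ and weighted by the relevant logarithm, that makes the integral converge to a universal constant independent of $x$. Once one accepts \eqref{e2.3}, the remaining arguments are elementary.
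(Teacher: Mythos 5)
Your proof is correct and follows essentially the same route as the paper's: both rest on the disjointness of the $\Sigma_Q$'s, the decomposition of $u$ into $\mathcal{P}_t(f-f_{Q(y,\cdot)})$ plus $\mathcal{P}_t(1)\cdot f_{Q(y,\cdot)}$ controlled by \eqref{e3.1} and Lemma~\ref{le2.2}, and the key weighted estimate \eqref{e2.3}. The only (harmless) difference is that you first replace $u(y_Q,t_Q)$ by $u(y,t)$ via Lemma~\ref{le3.1}(i), which is where your $CA$ term comes from, whereas the paper decomposes $u(y_Q,t_Q)$ directly and obtains the slightly sharper bound $C\|f\|_{{\rm BMO_\L}(\RR)}$.
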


\begin{proof}   To estimate the term
$$
 \iint_{\Sigma_Q} u(y_Q,t_Q) t {\mathcal P}_t(x,y)
  V(y)     dydt.
  $$
We
write $u(y_Q, t_Q)={\mathcal P}_{t_Q}f(y_Q)={\mathcal P}_{t_Q}(f-f_{Q(y, \, t_Q)})(y_Q)
+{\mathcal P}_{t_Q}(1)(y_Q) f_{Q(y, \, t_Q)}$ where $(y,t)\in \Sigma_Q$.
  This, in combination with \eqref{e3.1} and \eqref{e2.3},
 shows   that
\begin{align}\label{e3.14}
\sum_Q | I_Q(x)|&\leq  \sum_Q \iint_{\Sigma_Q}  |{\mathcal P}_{t_Q}(f-f_{Q(y,\, t_Q)})(y_Q)|  t {\mathcal P}_t(x,y)
  V(y)     dydt\\
  &\quad+  \sum_Q \iint_{\Sigma_Q}  |{\mathcal P}_{t_Q} (1)(y_Q) f_{Q(y,\, t_Q)}|  t {\mathcal P}_t(x,y)
  V(y)     dydt\nonumber\\
  &\leq C \|f\|_{{\rm BMO_\L}(\RR)} + C\sum_Q {\widetilde {I_Q}}(x) .  \nonumber
  \end{align}
  By Lemma~\ref{le2.2},
    \begin{eqnarray*}
  |f_{Q(y, \, t_Q)}| \leq
  \left\{
  \begin{array}{lll}
  C\|f\|_{{\rm BMO_\L}(\RR)}
   \left(1+{\rm log} {\rho(y)\over t_Q}\right), &\ \ {\rm if}& \ \ t_Q\leq \rho(y); \\[8pt]
    C\|f\|_{{\rm BMO_\L}(\RR)}, & \ \ {\rm if}& \ \ t_Q> \rho(y).
   \end{array}
   \right.
   \end{eqnarray*}
  It follows from \eqref{e2.3} that
   \begin{align*}
 \sum_Q {\widetilde {I_Q}}(x)
  &\leq
  C \|f\|_{{\rm BMO_\L}(\RR)}
   \sum_Q \iint_{\Sigma_Q} \left(1+ \max \left\{ {\rm log} {\rho(y)\over t_Q}, 1\right\} \right) t {\mathcal P}_t(x,y) V(y)dydt \\
  &\leq    C \|f\|_{{\rm BMO_\L}(\RR)}
  \iint_{\Bbb R^{n+1}_+} \left(1+ \left |{\rm log} {\rho(y)\over t}\right|\right) t {\mathcal P}_t(x,y)  V(y)    dy{dt }  \\
  &\leq     C \|f\|_{{\rm BMO_\L}(\RR)}  \int_0^{\infty} \min\left\{ \left({t\over \rho(x)}\right)^{\delta}, \left({t\over \rho(x)}\right)^{-{N\over2}+2}  \right\} {dt\over t} \\
  &\leq   C \|f\|_{{\rm BMO_\L}(\RR)},
  \end{align*}
  which, together with \eqref{e3.14}, shows that
  $\sum_Q | I_Q(x)|  \leq C \|f\|_{{\rm BMO_\L}(\RR)}.$
  The proof is complete.
\end{proof}

\medskip
Next we estimate the term    $II_Q(x)$.
Following \cite{W}, one writes
$$
F(Q: t,x,y)= t\left( {\mathcal P}_t(x,y) \, {\partial \over \partial \nu }  u(y,t)
 +
{\partial \over \partial \nu } {\mathcal P}_t(x,y) \left(u(y, t)-u(y_Q,t_Q)\right)  \!  \right) \!- {\mathcal P}_t(x,y)
\left(u(y,t)-u(y_Q,t_Q)\right) {\partial t \over \partial \nu }
$$
and
\begin{align}\label{e3.144}
 II_Q(x)
&=\!\!\iint_{\partial \Sigma_Q}\!F(Q: t,x,y) d\sigma_Q\nonumber\\
&=   \int_{\partial \Sigma_Q\cap {\Bbb R}^n} F(Q: t,x,y)    d\sigma_Q +\int_{\partial \Sigma_Q\cap \{ t>0\}} F(Q: t,x,y)    d\sigma_Q
  =:  II^{(1)}_Q(x)  +II^{(2)}_Q(x).
\end{align}
It can be verified that
$II^{(1)}_Q(x)$ is equal to
$$
\left(f(x)-u(y_Q,t_Q)\right) \chi_{\partial\Sigma_Q\cap {\Bbb R}^n}(x)=h_Q(x).
$$
The supports of the different ${h_Q}^{,}$s  are easily seen to be disjoint, and so we may set
\begin{eqnarray*}
h(x)=\sum_{Q\in \big(\bigcup_{k=0}^{\infty}{\bf G}_k\big)} h_Q(x).
\end{eqnarray*}
Then the following result holds.

\begin{lemma}\label{le3.3}  The function $h$ satisfies
\begin{eqnarray}\label{e3.15}
\|h\|_{\infty}\leq A + C\|f\|_{{\rm BMO_\L}(\RR)}
\end{eqnarray}
 with supp $h\subseteq \{x=(x_1, x_2, \cdots, x_n): \ |x_i|\leq 2, i=1, 2, \cdots, n\}.$
\end{lemma}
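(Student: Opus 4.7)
My plan is to first identify the bottom boundary $\partial\Sigma_Q\cap\RR$ explicitly as the set
$$
Q\setminus \bigcup_{\substack{Q'\subset Q,\ Q\in \mathbf{G}_k\\ Q'\in \mathbf{G}_{k+1}}} Q',
$$
and then note that as $Q$ ranges over $\bigcup_{k\ge 0}\mathbf{G}_k$, these sets are pairwise disjoint. By Lemma~\ref{le3.1}(ii), $\sum_{Q\in\mathbf{G}_k}|Q|\le 2^{-k}|Q_0|$, so the set of $x\in Q_0$ that lie in a cube of every generation has measure zero. Hence for almost every $x\in Q_0$ there is a unique $Q=Q(x)$ with $x\in\partial\Sigma_{Q}\cap\RR$, and $h(x)=f(x)-u(x_Q,t_Q)$. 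The support statement $\supp h\subseteq Q_0$ is immediate since each $h_Q$ is supported in $Q\subseteq Q_0$.

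For the $L^{\infty}$ estimate I fix such an $x$ and the associated $Q$. The key geometric observation is that since $x\in Q$ but $x$ is not contained in any of the selected children $Q'\in\mathbf{G}_{k+1}$, the entire vertical segment $\{(x,t):0<t<t_Q\}$ actually lies inside $\Sigma_Q$. Indeed, if $(x,t)\in\widehat{Q'}$ for some child $Q'$ then in particular $x\in Q'$, contradicting the choice of $x$. Therefore Lemma~\ref{le3.1}(i) applies along this entire segment and yields
$$
|u(x,t)-u(x_Q,t_Q)|\le A+C\,\|f\|_{\rm BMO_\L(\RR)}\qquad\text{for all }0<t<t_Q.
$$

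It remains to pass to the limit $t\to 0^+$. By Lemma~\ref{le2.2}(ii) together with the compact support of $f$, we have $f\in L^2(\RR)$, so $u(x,t)=e^{-t\sqrt{\L}}f(x)$ converges to $f(x)$ at almost every $x\in\RR$ as $t\to 0^+$ (nontangential/vertical a.e.\ convergence of the Poisson integral of an $L^2$ function for the semigroup $\{e^{-t\sqrt\L}\}_{t>0}$). Letting $t\to 0^+$ in the previous display then gives
$$
|h(x)|=|f(x)-u(x_Q,t_Q)|\le A+C\,\|f\|_{\rm BMO_\L(\RR)} \qquad \text{for a.e. } x\in Q_0,
$$
which is the desired bound $\|h\|_\infty\le A+C\|f\|_{\rm BMO_\L(\RR)}$.

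The main technical point I expect to have to treat with care is the a.e.\ boundary convergence $e^{-t\sqrt{\L}}f(x)\to f(x)$ as $t\to 0^+$ for $f\in L^2(\RR)$; this is standard for the Schr\"odinger Poisson semigroup (one can either cite the general maximal-function theory, or reduce to the heat-semigroup version via the subordination formula~\eqref{e3.28}), but it is the only ingredient beyond the geometric bookkeeping above and Lemma~\ref{le3.1}. Everything else is a direct consequence of the definition of $h_Q$ and the vertical-column observation.
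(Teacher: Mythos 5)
Your proposal is correct and follows essentially the same route as the paper: both arguments reduce the bound to Lemma~\ref{le3.1}(i) applied on (the closure of) $\Sigma_Q$ and then pass to the limit $t\to 0^+$ using the a.e.\ convergence $e^{-t\sqrt{\L}}f\to f$ for $f\in L^2(\RR)$ (the paper cites Stein's maximal theorem for this). Your extra bookkeeping — the disjointness of the bottom boundaries, the measure-zero intersection of all generations via Lemma~\ref{le3.1}(ii), and the observation that the whole vertical column over such an $x$ lies in $\Sigma_Q$ — is a correct and slightly more explicit version of the paper's terser ``taking limits'' step.
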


\begin{proof} By (ii) of Lemma~\ref{le2.2}, we know that $f\in L^2_{\rm loc}(\RR)$. Since $f\in {\rm BMO}_{\L}(\RR)$
have compact support, $f$ belongs to $L^2(\RR).$
Then by  (b) of Maximal Theorem on Stein's book \cite[Section 3 of Chapter III]{St},
$$
\lim\limits_{t\to  0}e^{-t\sqrt{\L}}  f(x)=f(x), \ \ \  {\rm a.e.}
$$
By (i) of Lemma~\ref{le3.1}, it follows that  for every $(x,t)\in  \Sigma_Q,$
$$
|e^{-t_Q\sqrt{\L}} (f)(x_Q)-e^{-t\sqrt{\L}} (f)(x) |\leq A+ C\|f\|_{{\rm BMO_\L}(\RR)}.
$$
Taking limits,  this holds for each $h_Q(x), x\in \partial\Sigma_Q\cap {\Bbb R}^n.$  This proves
\eqref{e3.15}.
\end{proof}

We will finished with the term $f_1$ once we show that
\begin{eqnarray}\label{e3.16}
\sum_Q II^{(2)}_Q(x)= S_{\mu, {\mathcal P}}
\end{eqnarray}
for some $\mu$ with $\||\mu\||_c\leq C(n) \|f\|_{{\rm BMO_\L}(\RR)}.$ We proceed to do this now. Write
\begin{align}\label{e3.1444}
II^{(2)}_Q(x)&= \iint_{\partial \Sigma_Q \cap \{ t>0\}} t\left( {\mathcal P}_t(x,y) \, {\partial \over \partial \nu }  u(y,t)
 +
{\partial \over \partial \nu } {\mathcal P}_t(x,y)\, \left(u(y, t)-u(y_Q,t_Q)\right)  \,   \right)d\sigma_Q\nonumber\\
&\quad- \iint_{\partial \Sigma_Q \cap \{ t>0\}}
  {\mathcal P}_t(x,y)
\left(u(y,t)-u(y_Q,t_Q)\right) {\partial t \over \partial \nu }
  d\sigma_Q\nonumber\\
  &=:II^{(21)}_Q(x)+II^{(22)}_Q(x).
\end{align}
By fact \eqref{e3.2} and the way we chose the $Q'{\rm s}$,
$$
\left|t {\partial \over \partial \nu }  u(y,t)  -\left(u(y,t)-u(y_Q,t_Q)\right) {\partial t \over \partial \nu }
\right|\leq C(n) \|f\|_{{\rm BMO_\L}(\RR)}
$$
when $(y,t)\in \partial\Sigma_Q\cap\{t>0\}$. Hence,
\begin{eqnarray*}
\iint_{\partial \Sigma_Q \cap \{ t>0\}}  \left(
t {\partial \over \partial \nu }  u(y,t)  -\left(u(y,t)-u(y_Q,t_Q)\right) {\partial t \over \partial \nu } \right)
{\mathcal P}_t(x,y) d\sigma_Q
 =\iint_{\partial \Sigma_Q \cap \{ t>0\}} {\mathcal P}_t(x,y) h_Q(y,t)d\sigma_Q
\end{eqnarray*}
for some $h_Q(y,t)$ with $|h_Q|\leq C(n) \|f\|_{{\rm BMO_\L}(\RR)}$.
By (ii) of Lemma~\ref{le3.1}, it is well known (see \cite[Chapter VIII]{G}, \cite{W}) that
\begin{eqnarray}\label{e3.17}
 \left\|\left|\sum_{k\geq 0}\sum_{Q\in{\bf G}_k}  d\sigma_Q\right\|\right|_c\leq C(n) ,
\end{eqnarray}
which, together with the condition $|h_Q|\leq C(n)\|f\|_{{\rm BMO_\L}(\RR)}$, implies that
\begin{eqnarray*}
 \left\|\left|\sum_{k\geq 0}\sum_{Q\in{\bf G}_k}  h_Q(y,t) d\sigma_Q\right\|\right|_c\leq C(n) \|f\|_{{\rm BMO_\L}(\RR)}.
\end{eqnarray*}
Thus, to obtain \eqref{e3.16}, we only need to  estimate the integrals
 \begin{eqnarray}\label{e3.18}
 \iint_{\partial \Sigma_Q \cap \{ t>0\}} t
{\partial \over \partial \nu } {\mathcal P}_t(x,y)\, \left(u(y, t)-u(y_Q,t_Q)\right)  \, d\sigma_Q.
\end{eqnarray}
For this, we need  the following lemma.

\begin{lemma}\label{le3.4}  Let $\{z_i\}=\{(x_i, t_i)\}\subset {\mathbb R}^{n+1}_+$
be points and let $\delta_{z_i}=$ the Dirac mass at $z_i$. Assume that
$$
\left\|\left| \sum_i   t_{i}^n \delta_{z_{i}} \right\|\right|_c \leq 1.
$$
Let $ds_i$ denote $n$-dimensional Lebesgue measure on the hyperplane $\{ t=t_i\}\subset {\mathbb R}^{n+1}_+$
and let $\Phi_t(x, y)=t/(t+|x-y|)^{n+1}$. Set

$$
\mu(x,t)=\sum_i t_{i}^n \Phi_{t_i} (x, x_i) ds_i.
$$
Then $\||\mu\||_c\leq C(n).$
\end{lemma}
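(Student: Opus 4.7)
The plan is to verify the Carleson condition directly from the definition: fix an arbitrary cube $Q \subset \mathbb{R}^n$ with sidelength $\ell = \ell(Q)$. Since $\mu$ is concentrated on the horizontal slices $\{t = t_i\}$, the condition $(x, t_i) \in \widehat{Q}$ reads $x \in Q$ and $t_i < \ell$, so
$$
\mu(\widehat{Q}) = \sum_{i:\, t_i < \ell} t_i^n \int_Q \Phi_{t_i}(x, x_i)\, dx.
$$
I would decompose this sum dyadically in the location of $x_i$: write $Q_k = 2^k Q$ (the concentric dilate) and split into the \emph{local} piece $\{i: x_i \in Q\}$ and the \emph{far} pieces $\{i: x_i \in Q_{k+1} \setminus Q_k\}$ for $k \geq 1$. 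The two regimes are handled by different mechanisms.

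For the local piece, the key observation is that for every $t > 0$ one has $\int_{\mathbb{R}^n} \Phi_t(x, y)\, dy = c_n$, independent of $t$ (by the rescaling $y \mapsto t y$). Hence the inner integral is bounded uniformly in $i$, and the Carleson hypothesis applied to $Q$ itself yields $\sum_{i:\, x_i \in Q,\, t_i < \ell} t_i^n \leq |Q|$, so the local contribution is at most $c_n |Q|$.

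For the far pieces I would use the pointwise decay of $\Phi_{t_i}$: when $x \in Q$ and $x_i \in Q_{k+1} \setminus Q_k$ with $k \geq 1$, one has $|x - x_i| \gtrsim 2^k \ell$, and since $t_i < \ell \leq 2^k \ell$ this yields $\Phi_{t_i}(x, x_i) \lesssim t_i /(2^k \ell)^{n+1}$. Integrating over $Q$ produces a factor $|Q|/(2^k \ell)^{n+1}$, and using $t_i \leq \ell$ trades the stray $t_i$ for an $\ell$, converting $t_i^{n+1}$ into $t_i^{n}\cdot \ell$. The Carleson hypothesis applied to the dilate $Q_{k+1}$ then bounds the resulting $\sum t_i^n$ by $|Q_{k+1}| = 2^{(k+1)n}|Q|$, so each annulus contributes at most $C \cdot 2^{-k}|Q|$. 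Summing the geometric series in $k$ gives $\mu(\widehat{Q}) \leq C(n)\,|Q|$, as required.

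The main delicate point is the exponent bookkeeping in the far regime: the pointwise decay of $\Phi_{t_i}$ produces one extra power of $t_i$ that must be traded against a factor of $\ell$ (via $t_i < \ell$) before the Carleson hypothesis on $Q_{k+1}$ can be applied. The resulting dyadic gain $2^{-k}$ then comes precisely from the fact that the decay exponent $n+1$ of $\Phi_t$ exceeds by exactly one the growth exponent $n$ of the ambient Lebesgue measure.
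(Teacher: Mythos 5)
Your argument is correct, and the exponent bookkeeping in the far regime checks out: the annulus $2^{k+1}Q\setminus 2^kQ$ contributes at most $C(n)\,2^{-k}|Q|$ because the decay exponent $n+1$ of $\Phi_t$ beats the volume growth $2^{kn}$ by exactly one power of $2^{-k}$. The paper itself gives no proof of this lemma, deferring to the Lemma in Wilson's paper \cite{W}; your direct verification (split into the local piece controlled by $\int_{\mathbb{R}^n}\Phi_t(x,y)\,dx=c_n$ plus dyadic annuli controlled by the pointwise decay of $\Phi_{t_i}$ and the Carleson hypothesis on the dilates $2^{k+1}Q$) is precisely the standard argument that Wilson's proof carries out, so there is no substantive difference in approach.
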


\begin{proof}
Its proof is similar to that of LEMMA of \cite{W}, and we skip it here.
\end{proof}

To estimate \eqref{e3.18},  we follow \cite{W} to write
$$
\partial \Sigma_Q \cap \{ t>0\}=\bigcup_i E_{i, Q}
$$
where each $E_{i, Q}$ is of the form
\begin{eqnarray*}
E_{i, Q}=
\left\{
\begin{array}{ll}
\{(x, t): \ x\in \partial Q_i, &\ \ell(Q_i) \leq t\leq 2\ell(Q_i)\}\cap \partial \Sigma_Q\ \ \mbox{or}\\[6pt]
\{(x, t): \ x\in {\overline Q_i}, &\ t=\ell(Q_i)  \}
\end{array}
\right.
\end{eqnarray*}
for some dyadic cube $Q_i.$ These $E_{i, Q}$ make a tiling of $\partial \Sigma_Q \cap \{ t>0\}$,
with the size of the tiles going to zero as $t\to 0.$  Let $(x_{i, Q}, {\tilde  t}_{i, Q})$ be the centroid
of $E_{i, Q}$ in ${\mathbb R}^{n+1}_+$ and let $\xi_{i, Q}=(x_{i, Q}, \, {1\over 2} {\tilde t}_{i, Q} )
\equiv (x_{i,Q}, t_{i, Q}).$
It is easy to see that, for any cube $Q^{\ast}=2Q$,
\begin{eqnarray*}
\int_{{\widehat {Q^{\ast}}}} \sum_i {\tilde  t}_{i, Q}^n \delta_{\xi_{i, Q}} dydt\leq
\int_{ 4{\widehat {Q^{\ast}}}} d\sigma_Q.
\end{eqnarray*}
Therefore,
by \eqref{e3.17},
\begin{eqnarray}\label{e3.19}
\left\|\left| \sum_{k\geq 0}\sum_{Q\in{\bf G}_k} \sum_i {\widetilde  t}_{i, Q}^n \, \delta_{\xi_{i, Q}} \right\|\right|_c \leq C(n) \|f\|_{{\rm BMO_\L}(\RR)}.
\end{eqnarray}
Observe that
\begin{eqnarray*}
t
{\partial \over \partial \nu } {\mathcal P}_t(x,y)
=\int_{\Bbb R^n} {\mathcal P}_{t_{i, Q}}(x, z) \left(t
{\partial \over \partial \nu } {\mathcal P}_{t-t_{i, Q}} (z, y)\right)  dz.
\end{eqnarray*}
For every $(y, t)\in E_{i, Q}$,  it is easy to see that $|y-x_{i, Q}|\leq t.$
Also we have that   $t- t_{i, Q}> t_{i, Q}/3 $,
$t\leq 8t_{i, Q}/3$, and so $t\sim t- t_{i, Q}.$ Then there exists a constant  $C>0$
uniformly for $(y, t)\in E_{i, Q}$ such that
$$
 \left|t
{\partial \over \partial \nu } {\mathcal P}_{t-t_{i, Q}} (z, y)\right|
\leq C {t_{i, Q} \over \left(t_{i, Q}+ |z- x_{i, Q}|\right)^{n+1}}.
$$
It then follows that
\begin{eqnarray}\label{e3.20}
\sum_{k\geq 0}\sum_{Q\in{\bf G}_k} II^{(22)}_Q(x)
 &=& \sum_{k\geq 0}\sum_{Q\in{\bf G}_k}\sum_{i } \iint_{E_{i, Q}} t
{\partial \over \partial \nu } {\mathcal P}_t(x,y)\, \left(u(y, t)-u(y_Q,t_Q)\right)  \, d\sigma_Q\\
&=&  \sum_{k\geq 0}\sum_{Q\in{\bf G}_k}\sum_{i } t^n_{i, Q}   \int_{\Bbb R^n} {\mathcal P}_{t_{i, Q}}(x, z)
 \Lambda^{i,Q}_{t-t_{i, Q}}  (z, x_{i, Q})dz_{i, Q}\nonumber\\
 &=&  \int_{{\mathbb R}^{n+1}_+} {\mathcal P}_t(x, y) d\rho_Q(y, t),\nonumber
\end{eqnarray}
where
\begin{eqnarray}\label{e3.21}
d\rho_Q= \sum_{k\geq 0}\sum_{Q\in{\bf G}_k} \sum_{i} t^n_{i, Q} \Lambda^{i,Q}_{t-t_{i, Q}}  (z, x_{i, Q})dz_{i, Q}
\end{eqnarray}
and
 \begin{eqnarray}\label{e3.22}
|\Lambda^{i,Q}_{t-t_{i, Q}}(z, x_{i, Q})|&=& \left|t^{-n}_{i, Q}
 \iint_{E_{i, Q}}   \left(u(y, t)-u(y_Q,t_Q)\right)  \left(t
{\partial \over \partial \nu } {\mathcal P}_{t-t_{i, Q}} (z, y)\right)  \, d\sigma_Q\right|\\
&\leq&  C {t_{i, Q} \over \left(t_{i, Q}+ |z- x_{i, Q}|\right)^{n+1}}.\nonumber
\end{eqnarray}
Here we used the fact  that for every $(y, t)\in E_{i, Q}\subset \partial \Sigma_Q$,
it follows by (i) of Lemma~\ref{le3.1} that
$|u(y, t)-u(y_Q,t_Q)\leq A+ C\|f\|_{{\rm BMO_\L}(\RR)}.$
Hence, from estimates \eqref{e3.19}, \eqref{e3.20}, \eqref{e3.21} and \eqref{e3.22},
 we apply   Lemma~\ref{le3.4} to  finish  the proof of the term $f_1.$

Now for the term $f_2(x)$ in \eqref{e3.11},  Green's theorem gives us
\begin{align}\label{e3.23}
f_2(x)&= u(y_{Q_0},t_{Q_0})  \iint_{{{\mathbb R}^{n+1}_+\backslash {\widehat Q_0}}} t {\mathcal P}_t(x,y)
  V(y)     dydt\nonumber\\
  & +\left\{\iint_{\partial  {\widehat Q_0} \cap {\mathbb R}^{n+1}_+ }
  t {\partial \over \partial \nu }  {\mathcal P}_t(x,y)  \,  \left(u(y,t)-u(y_{Q_0},t_{Q_0})\right) d\sigma_{Q_0}\right.\\
 &\quad+  \iint_{\partial  {\widehat Q_0} \cap {\mathbb R}^{n+1}_+ }  t   {\mathcal P}_t(x,y) \,  {\partial \over \partial \nu }  u(y,t)
   d\sigma_{Q_0} \nonumber\\
 &\left.\quad- \iint_{\partial  {\widehat Q_0} \cap {\mathbb R}^{n+1}_+ } {\mathcal P}_t(x,y) \,
\left(u(y,t)-u(y_{Q_0}, t_{Q_0}) \right)   {\partial t \over \partial \nu }d\sigma_{Q_0}\right\}\nonumber\\
  &=III_{Q_0}(x) +IV_{Q_0}(x), \nonumber
 \end{align}
where $d\sigma_{Q_0}$ is $n$-dimensional surface measure on $\partial {\widehat Q_0} $
and $\partial/\partial\nu$ now denotes the normal derivative into ${\widehat Q_0}$.

To estimate the term $III_{Q_0}(x),$ we write $ u(y_{Q_0},t_{Q_0})= ( u(y_{Q_0},t_{Q_0})-f_{Q_0}) +f_{Q_0}$, it follows
by Lemma~\ref{le2.2} that $|u(y_{Q_0},t_{Q_0})|\leq  C \|f\|_{{\rm BMO_\L}(\RR)}.$ We apply an argument as in $I_Q(x)$ to show that
\begin{eqnarray*}
|III_{Q_0}(x)|
  &\leq  &    C \|f\|_{{\rm BMO_\L}(\RR)}
  \iint_{\Bbb R^{n+1}_+}   t {\mathcal P}_t(x,y)  V(y)    dy{dt }  \\
 &\leq&     C \|f\|_{{\rm BMO_\L}(\RR)}  \int_0^{\infty} \min\left\{ \left({t\over \rho(x)}\right)^{\delta}, \left({t\over \rho(x)}\right)^{-{N\over2}+2}  \right\} {dt\over t} \\
 &\leq&    C \|f\|_{{\rm BMO_\L}(\RR)}.
\end{eqnarray*}
For the term  $IV_{Q_0}(x)$, we notice that ${\partial {\widehat Q_0} \cap {\mathbb R}^{n+1}_+}$ is away from the support of $f$,
$$
|u(y,t)-u(y_{Q_0}, t_{Q_0})|\leq C \|f\|_{{\rm BMO_\L}(\RR)}\ \ \ \ {\rm and}\ \  |t\nabla u(y, t)|\leq C \|f\|_{{\rm BMO_\L}(\RR)}
$$
on
${\partial {\widehat Q_0} \cap {\mathbb R}^{n+1}_+} $. So, since $d\sigma_{Q_0}$ is a Carleson measure
with norm $\leq C$, these terms   present no problem. We now handle the other term   by cutting
${\partial {\widehat Q_0}}$ into tiles, just as we did for $f_1(x)$, to  obtain   estimate for $IV_{Q_0}(x)$. This finishes the proof
of      $f_2(x)$.

 Finally, we  collect estimates \eqref{e3.11}, \eqref{e3.12}, \eqref{e3.144} for   $f_1(x)$ and  \eqref{e3.23} for  $f_2(x)$ to write
 $$f=g(x) + S_{\mu, {\mathcal P}} (x)
$$
where
$$
g(x) =\sum_{k\geq 0} \sum_{Q\in {\bf G}_k}  I_Q(x) + \sum_{k\geq 0} \sum_{Q\in {\bf G}_k}  II_Q^{(1)}(x)  +III_{Q_0}(x) \in L^{\infty}(\RR)
$$
and a finite Carleson measure $\mu$  such that
$$
S_{\mu, {\mathcal P}} (x) =\sum_{k\geq 0} \sum_{Q\in {\bf G}_k}II_{Q}^{(2)}(x) +IV_{Q_0}(x)=\int_{{\mathbb R}^{n+1}_+}  {\mathcal P}_{t}(x,y) d\mu(y, t)
$$
with $  \|g\|_{L^\infty(\RR)}+ \||\mu\||_{c}\leq C(n)\|f\|_{{\rm BMO}_{\mathcal{L}}(\RR)}$.
The proof of  Theorem~\ref{th1.1} is complete.

\bigskip

 Consider $\L=-\Delta +V(x)$, where $V\in L^1_{\rm loc}({\Bbb R}^n)$ is a
non-negative function on ${\Bbb R}^n$.
Let  $\{e^{-t\L}\}_{t>0}$ be the heat  semigroup associated to $\L$:
\begin{eqnarray}\label{e3.24}
 e^{-t\L}f(x)=\int_{\Real^n}{\mathcal H}_t(x,y)f(y)~dy,\qquad f\in L^2(\Real^n),~x\in\Real^n,~t>0.
\end{eqnarray}
Since the potential $V$ is nonnegative,  the    kernel ${\mathcal H}_t(x,y)$
of the semigroup
$e^{-t\L}$  satisfies
\begin{eqnarray}
0\leq {\mathcal H}_t(x,y)\leq h_t(x-y)
\label{e3.25}
\end{eqnarray}
for all $x,y\in\RR$ and $t>0$, where
\begin{equation}\label{e3.26}
h_t(x-y)=\frac{1}{(4\pi t)^{n/2}}~e^{-\frac{\abs{x-y}^2}{4t}}
\end{equation}
is the kernel of the classical heat semigroup
$\set{T_t}_{t>0}=\{e^{t\Delta}\}_{t>0}$ on $\Real^n$.
Then we have the following result.

 \begin{thm} \label{prop3.5} Suppose $V\in B_q$ for some $q\geq n.$ Then  for
 every $f\in {\rm BMO}_{\mathcal{L}}(\RR)$ with compact support, there exist   $g\in L^{\infty}(\RR)$
 with compact support and a finite Carleson measure $\mu$ such that
 $$
 f(x)=g(x) + \int_{{\mathbb R}^{n+1}_+}  {\mathcal H}_{t^2}(x,y) d\mu(y, t),
 $$
 where $\|g\|_{L^\infty(\RR)} +\||\mu\||_{c}\leq C(n)\|f\|_{{\rm BMO}_{\mathcal{L}}(\RR)}.$
 \end{thm}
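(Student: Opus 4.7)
The plan is to reduce Theorem~\ref{prop3.5} to Theorem~\ref{th1.1}(ii) via Bochner's subordination identity \eqref{e PH}, which writes the Poisson kernel of $\mathcal L$ as a weighted superposition of heat kernels. First, I would apply Theorem~\ref{th1.1}(ii) to the compactly supported $f$ to produce $g_1\in L^\infty(\RR)$ and a finite Carleson measure $\mu_1$ on $\mathbb R^{n+1}_+$ with
\[
f(x)=g_1(x)+\iint_{\mathbb R^{n+1}_+}{\mathcal P}_t(x,y)\,d\mu_1(y,t),\qquad \|g_1\|_\infty+\||\mu_1\||_c\le C\|f\|_{{\rm BMO}_{\mathcal L}(\RR)}.
\]

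Next, substituting $s=r^2$ in \eqref{e PH} gives the pointwise identity
\[
{\mathcal P}_t(x,y)=\frac{t}{\sqrt\pi}\int_0^\infty {\mathcal H}_{r^2}(x,y)\,\frac{e^{-t^2/(4r^2)}}{r^2}\,dr,
\]
so Tonelli's theorem (applicable since the heat kernel is nonnegative) allows one to rewrite the Poisson balayage as a heat balayage
\[
\iint_{\mathbb R^{n+1}_+}{\mathcal P}_t(x,y)\,d\mu_1(y,t)=\iint_{\mathbb R^{n+1}_+}{\mathcal H}_{r^2}(x,y)\,d\nu(y,r),
\]
where the measure $\nu$ on $\mathbb R^{n+1}_+$ is defined by
\[
\iint\varphi(y,r)\,d\nu(y,r)=\iint_{\mathbb R^{n+1}_+}\int_0^\infty\varphi(y,r)\,\frac{t\,e^{-t^2/(4r^2)}}{\sqrt\pi\,r^2}\,dr\,d\mu_1(y,t).
\]

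The main step is then to verify that $\nu$ is a finite Carleson measure with $\||\nu\||_c\le C\||\mu_1\||_c$. For a cube $Q$ of sidelength $L$, the change of variable $u=t/(2r)$ yields
\[
\int_0^L\frac{t\,e^{-t^2/(4r^2)}}{\sqrt\pi\,r^2}\,dr=\frac{2}{\sqrt\pi}\int_{t/(2L)}^\infty e^{-u^2}\,du\le C\min\Bigl\{1,\ \tfrac{L}{t}e^{-t^2/(4L^2)}\Bigr\}.
\]
I would then split the $t$-axis at $2L$ and further dyadically into annuli $(2^kL,2^{k+1}L]$ for $k\ge 1$. On $(0,2L]$ the contribution is bounded by $C\mu_1(\widehat{2Q})\le C\||\mu_1\||_c|Q|$, while on the $k$th annulus one uses $\mu_1\bigl(Q\times(2^kL,2^{k+1}L]\bigr)\le\mu_1(\widehat{2^{k+1}Q})\le\||\mu_1\||_c(2^{k+1}L)^n$ together with the factor $(L/t)e^{-t^2/(4L^2)}\le 2^{-k}e^{-4^{k-1}}$; the resulting sum converges rapidly by the $e^{-4^{k-1}}$ factor. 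Extending the calculation to $L=\infty$ gives $\int_0^\infty\frac{te^{-t^2/(4r^2)}}{\sqrt\pi\,r^2}\,dr=1$, so $\nu(\mathbb R^{n+1}_+)=\|\mu_1\|<\infty$.

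The remaining issue, which I expect to be the main obstacle, is arranging that $g$ has compact support. Looking back at the construction of $g_1$ in the proof of Theorem~\ref{th1.1}, one has $g_1=\sum_Q I_Q+\sum_Q II_Q^{(1)}+III_{Q_0}$; the middle sum is already supported in $Q_0$, while $\sum_Q I_Q$ and $III_{Q_0}$ are Poisson balayages of explicit measures of the form $u(y_Q,t_Q)\,t\,V(y)\,\chi_{\Sigma_Q}(y,t)\,dy\,dt$ (and its analogue on the exterior of $\widehat{Q_0}$). The delicate point is to verify that these auxiliary measures are Carleson with norm controlled by $\|f\|_{{\rm BMO}_{\mathcal L}(\RR)}$; this should follow from the integral estimate \eqref{e2.3} combined with the logarithmic growth bound $|u(y_Q,t_Q)|\le C(1+\log^+(\rho(y_Q)/t_Q))\|f\|_{{\rm BMO}_{\mathcal L}}$ given by Lemma~\ref{le2.2}(i). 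Once these auxiliary measures are absorbed into $\mu_1$ before applying the subordination step above, one obtains $g\in L^\infty(\RR)$ supported in $Q_0$ and a finite Carleson $\mu$ with the required bound $\|g\|_\infty+\||\mu\||_c\le C\|f\|_{{\rm BMO}_{\mathcal L}(\RR)}$.
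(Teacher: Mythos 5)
Your core reduction is exactly the paper's proof: apply Theorem~\ref{th1.1}(ii), use the subordination formula \eqref{e PH} to convert the Poisson balayage into a heat balayage, and verify that the pushed-forward measure is Carleson by splitting the $t$-integration at $\ell(Q)$ and over dyadic annuli $(2^{k-1}\ell(Q),2^k\ell(Q)]$, using the rapid decay of $\int_0^{2^{-k}}e^{-1/(4s^2)}s^{-2}\,ds$. That part is correct and needs no further comment.

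The one place where you go beyond the paper is the compact support of $g$. You are right that this is a genuine issue --- the paper's own proof simply invokes Theorem~\ref{th1.1}(ii), whose $g$ is \emph{not} shown to have compact support, and says nothing more --- but your proposed fix does not work as stated. The function $III_{Q_0}(x)=u(y_{Q_0},t_{Q_0})\iint_{\mathbb{R}^{n+1}_+\setminus\widehat{Q_0}}t\,\mathcal{P}_t(x,y)V(y)\,dy\,dt$ is the balayage of the measure $d\lambda=u(y_{Q_0},t_{Q_0})\,tV(y)\chi_{\mathbb{R}^{n+1}_+\setminus\widehat{Q_0}}\,dy\,dt$, and this $\lambda$ is neither finite (the $t$-integral of $t\,dt$ over $(0,\infty)$ already diverges) nor Carleson: for a large cube $Q'$ of sidelength $L$ one has $\iint_{\widehat{Q'}}tV\,dy\,dt\gtrsim L^{2}\int_{Q'}V$, and since $L^{2-n}\int_{Q'}V\to\infty$ as $L\to\infty$ (this is forced by the definition \eqref{e1.9} of $\rho$ once $L>\rho$), the ratio $\lambda(\widehat{Q'})/|Q'|$ is unbounded. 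The boundedness of $III_{Q_0}$ comes from estimate \eqref{e2.3}, i.e.\ from the smoothing of $V$ by $\mathcal{P}_t$, not from any Carleson property of the underlying measure, so it cannot be ``absorbed into $\mu_1$.'' The same caveat applies, with a $V$-dependent rather than purely dimensional constant, to the measures $\sum_Q u(y_Q,t_Q)\,tV(y)\chi_{\Sigma_Q}\,dy\,dt$. So either the compact-support clause must be dropped (matching what Theorem~\ref{th1.1}(ii) actually delivers), or a genuinely different argument is needed for the tail of $g$ at infinity; your sketch does not supply one.
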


 \begin{proof}
From Theorem~\ref{th1.1}, we know that for
 every $f\in {\rm BMO}_{\mathcal{L}}(\RR)$ with compact support, there exist $g\in L^{\infty}(\RR)$
 and a finite Carleson measure $\mu$ such that
 $
 f(x)=g(x) + S_{\mu, {\mathcal P}} (x),
 $
 where $\|g\|_{L^\infty(\RR)} +\|\mu\|_{C}\leq C(n)\|f\|_{{\rm BMO}_{\mathcal{L}}(\RR)}.$
 From this, and the subordination formula \eqref{e3.28}, we rewrite
\begin{align*}
S_{\mu, {\mathcal P}}(x)&=\int_{{\mathbb R}^{n+1}_+}  {\mathcal P}_{t}(x,y) d\mu(y, t) \\
&=\int_{{\mathbb R}^{n+1}_+}  {\mathcal H}_{s^2}(x,y)  \left(  \frac{1}{ \sqrt\pi}  \int_0^\infty  t
\frac{~e^{-{t^2\over 4s^2}}}{s^{2}}   d\mu(y, t)  ~ds \right)\\
&=:\int_{{\mathbb R}^{n+1}_+}  {\mathcal H}_{s^2}(x,y)  d\mu(y, s).
\end{align*}
The proof reduces to show that
$$
d\mu(y, s)=:\frac{1}{ \sqrt\pi}  \int_0^\infty  t
\frac{~e^{-{t^2\over 4s^2}}}{s^{2}}   d\mu(y, t)  ~ds
$$
is a Carleson measure on ${\mathbb R}^{n+1}_+$. Indeed,  for each cube  $Q$ on ${\mathbb R}^{n}$,
\begin{align*}
\int_0^{\ell(Q)}\!\!\int_Q \left(   \int_0^\infty  t
\frac{~e^{-{t^2\over 4s^2}}}{s^{2}}  d\mu(y, t)  ~ds \right)
&=\int_0^{\ell(Q)}\int_Q \left(   \int_0^{\ell(Q)} t
\frac{~e^{-{t^2\over 4s^2}}}{s^{2}}  d\mu(y, t)  ~ds \right)\\
&\quad+\sum_{k=1}^{\infty} \int_0^{\ell(Q)}\int_Q \left(  \int_{2^{k-1}\ell(Q)}^{2^k\ell(Q)}  t
\frac{~e^{-{t^2\over 4s^2}}}{s^{2}}  d\mu(y, t)  ~ds \right)\\
&\leq  \int_{\widehat Q}  d\mu(y, t) \left(   \int_0^{\infty}
\frac{~e^{-{ 1\over 4s^2}}}{s^{2}} ~ds \right)
+ \sum_{k=1}^{\infty} \int_{\widehat {2^kQ}} d\mu(y, t)\left(   \int_0^{2^{-k} }
\frac{~e^{-{ 1\over 4s^2}}}{s^{2}} ~ds \right)  \\
&\leq  C|Q| + C\sum_{k=1}^{\infty} 2^{-k(n+1)}|2^kQ|\\
&\leq C|Q|.
\end{align*}
This completes the proof.
\end{proof}

Finally, we apply Theorem~\ref{th1.1} to discuss   the dual theory of the spaces    $H^1_{\L}(\RR)$  and  ${{\rm BMO}_{\L}(\RR)}$ associated
to the Schr\"odinger operator. Let    $\L=-\Delta +V$    with $V\in B_q$ for some $q\geq n.$
Recall that a  Hardy-type space associated to $\L$ was
introduced by J. Dziubanski et  al.(see \cite{DGMTZ}),  defined  by
\begin{equation}\label{e4.1}
 H^1_{\L}(\RR):=\big\{ f\in L^1(\RR): {\mathcal P}^{\ast}f(x)= \sup_{|x-y|\leq t}|e^{-t\sqrt{\L}}f(y)|\in L^1(\RR) \big\}
\end{equation}
with
$
\|f\|_{H^1_{\L}(\RR)}:=\|{\mathcal P}^{\ast}f\|_{L^1(\RR)}.
$

 For such class of potentials,   $H^1_{\L}(\RR)$ admits an atomic characterization, where
cancellation conditions are only required for atoms with small supports.
It is known that if  $V\in B_q$ for some $q> n/2,$ then
 the dual space of $H^1_{\L}(\RR)$ is  ${{\rm BMO}_{\L}(\RR)}$, i.e.,
\begin{eqnarray}\label{e4.22}
(H^1_{\L}(\RR))^{\ast}={{\rm BMO}_{\L}(\RR)}.
\end{eqnarray}
 The proof of \eqref{e4.22} was given in \cite[Theorem 4]{DGMTZ}. See also \cite{DY2}.
Now, we can  derive the  half of the duality result \eqref{e4.22} from Theorem~\ref{th1.1}, see the proposition below.
We note that our proof here is independent of atomic decomposition of the Hardy space $H^1_{\L}(\RR)$ (\cite[Theorem 6.2]{HLMMY}).

\begin{prop}\label{prop4.1} Suppose $V\in B_q$ for some $q\geq  n.$ Then
${{\rm BMO}_{\L}(\RR)}$ is in
 the dual space of $H^1_{\L}(\RR)$, i.e.,
$$
{{\rm BMO}_{\L}(\RR)}\subseteq  (H^1_{\L}(\RR))^{\ast}.
$$
\end{prop}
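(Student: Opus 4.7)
My plan is to build the required bounded linear functional using Theorem~\ref{th1.1}(ii) in place of the atomic decomposition of $H^1_{\L}(\RR)$. Fix $f\in{\rm BMO}_{\L}(\RR)$ and take $h$ in the dense subclass of bounded, compactly supported elements of $H^1_{\L}(\RR)\cap L^2(\RR)$. For such $h$, the pairing $\Lambda_f(h)=\int_{\RR}f h\,dx$ is an absolutely convergent integral because Lemma~\ref{le2.2}(ii) places $f$ in $L^2_{\rm loc}$. The aim is to prove
\begin{equation*}
|\Lambda_f(h)|\le C\|f\|_{{\rm BMO}_{\L}(\RR)}\|h\|_{H^1_{\L}(\RR)},
\end{equation*}
after which a standard Cauchy argument against the dense subclass extends $\Lambda_f$ to all of $H^1_{\L}(\RR)$ with norm at most $C\|f\|_{{\rm BMO}_{\L}}$.

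The core estimate is the compactly supported case: assume temporarily that $f$ itself has compact support, and apply Theorem~\ref{th1.1}(ii) to write $f=g+S_{\mu,\mathcal P}$ with $\|g\|_\infty+\||\mu\||_c\le C\|f\|_{{\rm BMO}_{\L}(\RR)}$. Splitting
\begin{equation*}
\Lambda_f(h)=\int g h\,dx+\int S_{\mu,\mathcal P}(x) h(x)\,dx,
\end{equation*}
the first summand is at most $\|g\|_\infty\|h\|_{L^1}\le\|g\|_\infty\|h\|_{H^1_{\L}}$, using $|h|\le\mathcal P^{\ast}h$ a.e.\ (which follows from pointwise convergence $e^{-t\sqrt\L}h\to h$ at Lebesgue points). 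For the second summand, boundedness and compact support of $h$ together with finiteness of $\mu$ permit Fubini, and self-adjointness of $e^{-t\sqrt\L}$ yields
\begin{equation*}
\int S_{\mu,\mathcal P}(x) h(x)\,dx=\int_{\mathbb R^{n+1}_+}e^{-t\sqrt\L}h(y)\,d\mu(y,t).
\end{equation*}
Because the non-tangential maximal function on $\mathbb R^{n+1}_+$ of $(y,t)\mapsto e^{-t\sqrt\L}h(y)$ is exactly $\mathcal P^{\ast}h$, the Carleson embedding theorem applied to the Carleson measure $\mu$ bounds this expression by $C\||\mu\||_c\,\|\mathcal P^{\ast}h\|_{L^1}=C\||\mu\||_c\,\|h\|_{H^1_{\L}}$. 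Combining the two pieces yields the desired estimate whenever $f$ has compact support.

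The main obstacle I foresee is upgrading this to an arbitrary $f\in{\rm BMO}_{\L}(\RR)$, since Theorem~\ref{th1.1}(ii) is only stated for $f$ of compact support. My plan is a localization: for $h$ supported in $B(0,R)$, choose a smooth cutoff $\eta_R\in C_c^\infty(\RR)$ that equals $1$ on $B(0,R)$, is supported in $B(0,2R)$, and satisfies $\|\nabla\eta_R\|_\infty\le C/R$, so that $\int fh=\int(f\eta_R)h$. The technical crux is establishing
\begin{equation*}
\|f\eta_R\|_{{\rm BMO}_{\L}(\RR)}\le C\|f\|_{{\rm BMO}_{\L}(\RR)}
\end{equation*}
with $C$ independent of $R$. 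This requires simultaneous control of the mean-oscillation condition~\eqref{e1.7} on cubes crossing the transition region via the Leibniz expansion $f\eta_R-(f\eta_R)_Q=(f-f_Q)\eta_R+f_Q(\eta_R-(\eta_R)_Q)+\text{remainder}$ combined with Lemma~\ref{le2.2}(i) to control $|f_Q|$, and of the decay condition~\eqref{e1.8} on large cubes, where Lemma~\ref{le2.2}(ii) with $p=1$ supplies $|Q|^{-1}\int_Q|f|\le C\|f\|_{{\rm BMO}_{\L}}$ whenever $\ell(Q)\ge\rho(x_Q)$, and $\|\eta_R\|_\infty\le 1$ finishes the bound. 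Once this uniform localization is established, applying the compactly supported case to $f\eta_R$ produces the global estimate and completes the identification ${\rm BMO}_{\L}(\RR)\subseteq(H^1_{\L}(\RR))^{\ast}$.
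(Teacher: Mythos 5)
Your argument is correct and follows essentially the paper's own proof: decompose the (compactly supported) ${\rm BMO}_{\mathcal L}$ function via Theorem~\ref{th1.1}(ii), bound the $L^\infty$ piece against $\|h\|_{L^1}\leq \|{\mathcal P}^{\ast}h\|_{L^1}$, and handle the balayage piece by Fubini, self-adjointness of $e^{-t\sqrt{\L}}$, and the $L^1$ Carleson embedding against ${\mathcal P}^{\ast}h$. The only genuine addition is your cutoff step for general $f$, which the paper hides in a ``standard density argument''; note that to get $\|f\eta_R\|_{{\rm BMO}_{\mathcal L}(\RR)}\leq C\|f\|_{{\rm BMO}_{\mathcal L}(\RR)}$ uniformly in $R$ you will also need the growth estimate \eqref{e2.1} on $\rho$, so that on cubes meeting the transition annulus the quantity $\bigl(1+\log(\rho(x_Q)/\ell(Q))\bigr)\,\ell(Q)/R$ coming from Lemma~\ref{le2.2}(i) and $\|\nabla\eta_R\|_\infty\leq C/R$ remains bounded.
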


\begin{proof} Let $g\in {\rm BMO}_{\L}({\Bbb R^n})$ with compact support
and $\|g\|_{{{\rm BMO}_{\L}(\RR)}}= 1$.
By Theorem~\ref{th1.1}, there   exist  $h\in L^{\infty}(\RR)$
 and a finite Carleson measure $\mu$ such that
 $
g(x)=h(x) + S_{\mu, {\mathcal P}} (x),
 $
 where $\|h\|_{L^\infty(\RR)} +\||\mu\||_{c}\leq C(n).$
 Then for every $f\in H^1_{\L}(\RR),$
\begin{eqnarray*}
 \left| \int_{\RR} f(x) g(x)dx\right|&\leq &  \left| \int_{\RR} f(x) h(x) dx\right|
 + \left| \int_{\mathbb R^{n} } f(x)  S_{\mu, {\mathcal P}} (x) dx \right| \\
 &\leq &  C\|f\|_{L^1(\mathbb R^n)}
 + \left| \int_{\mathbb R^{n+1}_+} {\mathcal P}_tf(x)  d\mu(x,t) \right|\\
  \\
 &\leq &  C\|f\|_{L^1(\mathbb R^n)}
 + C\|{\mathcal P}^{\ast}f\|_{L^1(\mathbb R^n)}  \\
 &\leq & C\|f\|_{H^1_{\L}(\mathbb R^n)}.
\end{eqnarray*}
The desired result  follows by the standard density argument.
\end{proof}

\medskip
\noindent
{\bf Remarks.}
We would like to comment on the possibility of several generalizations and
open problems related to  Theorem~\ref{th1.1}.

 (1)  The first one is the extension of  (ii) of  Theorem~\ref{th1.1}  for the Schr\"odinger operators
 $-\Delta+V$ with  the  nonnegative potential $V\in B_q$ for some $q\geq n/2$,
 or assuming merely  that  the   potential
  $V$ of $\L$   is a locally nonnegative integrable function on ${\Bbb R}^n$,
  and this question will be considered in the future.

(2)
The  proof of  Theorem~\ref{th1.1}  uses the Poisson semigroup property, and Green's theorem in $\mathbb R^n$.
  We may ask whether  (ii) of Theorem~\ref{th1.1} still holds
for the  space ${\rm BMO}_{\L}(X)$ associated to the Poisson semigroup $\{ e^{-t\sqrt{\L}}\}$
 of abstract selfadjoint
operators $\L$ on
 metric measure space $X$
with certain proper assumptions, along which direction
there have been  lots of success
in the last few years,
see for example, in  \cite{DY1, DY2, DYZ, DGMTZ, HLMMY, MSTZ, Shen} and the
references therein.

\vskip 1cm

\noindent
{\bf Acknowledgments.} L. Yan would like to thank J. Michael Wilson for helpful discussions.
C. Peng is supported by the NNSF
of China, Grant No.~11501583. X. T. Duong  is supported by
Australian Research Council  Discovery Grant DP 140100649.
J. Li is supported by ARC DP 170101060.
L. Song is supported in part by the NNSF of China (Nos 11471338 and 11622113)
and Guangdong Natural Science Funds for Distinguished Young Scholar (No. 2016A030306040).
L. Yan is supported by the NNSF
of China, Grant No.~11371378 and  ~11521101  and Guangdong Special Support Program.

 \bigskip



\end{document}